\documentclass[11pt,reqno]{amsart}
\usepackage[T1]{fontenc}
\usepackage{lmodern}
\usepackage{amsmath,amssymb,mathtools}
\usepackage[a4paper,margin=28mm]{geometry}
\usepackage{microtype}
\usepackage{enumitem}
\usepackage{needspace}
\usepackage[hidelinks]{hyperref}
\usepackage{tikz}
\newtheorem{theorem}{Theorem}[section]
\newtheorem{lemma}[theorem]{Lemma}
\newtheorem{proposition}[theorem]{Proposition}
\newtheorem{corollary}[theorem]{Corollary}
\theoremstyle{definition}
\newtheorem{definition}[theorem]{Definition}
\newtheorem{question}[theorem]{Question}
\newtheorem{example}[theorem]{Example}
\theoremstyle{remark}

\numberwithin{equation}{section}
\newcommand{\N}{\mathbb N}
\newcommand{\down}[1]{\mathord{\downarrow}#1}
\newcommand{\up}[1]{\mathord{\uparrow}#1}
\newcommand{\Sc}{\sigma}
\newcommand{\Sig}{\Sigma}
\newcommand{\mc}{\wedge}
\newcommand{\jc}{\vee}
\newcommand{\Idl}{\operatorname{Idl}}
\newcommand{\cl}{\operatorname{cl}}
\setlist{nosep}
\hypersetup{
 pdftitle={Scott topologies on meet-continuous domains},
 pdfauthor={Xiaoquan Xu},
 pdfsubject={Single-author working draft; Scott products, arbitrary-product criterion and conditional sobriety}}
\title[Scott topologies on meet-continuous domains]{Scott topologies on meet-continuous domains}
\author{Xiaoquan Xu}
\address{School of Computer Information Engineering, Nanchang Institute of Technology, Nanchang 330044, China}
\email{xiqxu2002@163.com}
\thanks{This research was supported by the National Natural Science Foundation of China (Nos.~12471070, 12071199).}
\subjclass[2020]{06B35, 54D10, 54B10, 54A25}
\keywords{Scott topology, meet-continuous domain, $L$-dcpo, product topology, sober space, countable test family}
\begin{document}
\begin{abstract}
We study Scott products and sobriety of countable meet-continuous domains, meaning meet-continuous dcpos without any additional continuity or least-element assumption. Using the complete-lattice test-family theorem of Xu and Ji, we prove finite-product equality for those domains that are $L$-dcpos, and for the weaker class whose principal ideals have suprema of all nonempty subsets. For an arbitrary family of nonempty countable meet-continuous $L$-dcpos, we prove that the Scott topology on the order product equals the product of the factor Scott topologies if and only if only finitely many factors lack a least element. We also establish sobriety under bounded completeness and under additional common-upper-bound conditions. Assuming square-product equality, sobriety is characterized by Scott closedness of common-upper-bound sections associated with irreducible Scott-closed sets, with an equivalent sequential formulation in the countable case. Two extraction lemmas extend to meet-semilattice dcpos. The finite-product and sobriety questions for general countable meet-continuous domains remain unresolved here.
\end{abstract}
\maketitle

\section{Introduction and two questions}
The Scott topology relates order-theoretic directed convergence to topological continuity. Two basic issues are whether the Scott topology on an order product agrees with the ordinary product topology, and whether irreducible Scott-closed sets are principal ideals. These issues interact, but are distinct outside the setting of complete lattices; see \cite{Gierz,LawsonXu,LawsonXuLower}.

Xu and Ji \cite{XuJi} prove that every countable meet-continuous complete lattice has a sober Scott space and that arbitrary products of such lattices carry the product of their Scott topologies. A key step in that work is the construction, for each such lattice, of a countable family of directed sets that detects Scott openness. These complete-lattice results and the associated general test-family product criterion are used below as results of the companion paper, not as new claims of the present manuscript.

For general dcpos, meet-continuity does not require binary meets or joins. A dcpo $P$ is called a \emph{meet-continuous domain}, or equivalently a \emph{meet-continuous dcpo}, if, for every $x\in P$ and nonempty directed $D\subseteq P$,
\begin{equation}\label{eq:general-mc}
 x\le\bigvee D
 ~\Longrightarrow~
 x\in\cl_{\sigma(P)}(\down x\cap\down D),
\end{equation}
where $\Sig P$ denotes the Scott space; see \cite{KouLiuLuo} and \cite[Section~3]{JiaJungLi}. The terminology \emph{meet-continuous domain} is also used in \cite{LyuKou}. Throughout this paper the two expressions are synonymous: no way-below continuity, least element, or semilattice operation is implicit in the word \emph{domain}. We retain the term dcpo for arbitrary directed-complete posets. Condition~\eqref{eq:general-mc} agrees with the usual directed distributive law when binary meets exist. The complete-lattice theorem leads to the following two questions.

\begin{question}\label{quest:mc-dcpo-product}
Let $P$ and $Q$ be countable meet-continuous domains. Must
\[
 \Sc(P\times Q)=\Sc(P)\times\Sc(Q)
\]
hold?
\end{question}

\begin{question}\label{quest:mc-dcpo-sober}
Is the Scott space of every countable meet-continuous domain sober?
\end{question}

Neither question assumes a least element or any global semilattice operation. Throughout, $\Sc(P\times Q)$ refers to the Scott topology of the coordinatewise order, while $\Sc(P)\times\Sc(Q)$ denotes the ordinary topological product.

This manuscript records partial results toward these questions. Section~\ref{sec:local-tests} proves the finite-product equality when all principal ideals are complete lattices, that is, for $L$-dcpos in the terminology of Jia, Jung and Li \cite{JiaJungLi}. The local test families supplied by \cite{XuJi} can be combined because the ambient dcpo is countable. Section~\ref{sec:bottom} removes the local least-element requirement and proves sobriety for dcpos in which every nonempty upper-bounded subset has a global supremum.

For the general sobriety question, the absence of binary joins suggests replacing $\up(x\jc y)$ by $\up x\cap\up y$. The common-upper-bound method of Lawson and Xu \cite{LawsonXu,LawsonXuLower} yields sobriety from the strong $d$-space property together with the square product equality. Section~\ref{sec:upper-bounds} isolates the weaker condition needed only for irreducible Scott-closed sets and gives its sequential form. The distinction matters: a countable algebraic $L$-dcpo can fail to be a strong $d$-space. Section~\ref{sec:semilattice} identifies the parts of the meet-coordinate argument that continue to work when binary meets, but not binary joins, remain available.

Section~\ref{sec:arbitrary-products} gives an exact criterion for arbitrary products of countable meet-continuous $L$-dcpos: the Scott and ordinary product topologies agree exactly when all but finitely many factors have least elements. The necessity uses only the $L$-dcpo structure. The sufficiency combines the finite-product theorem with a finite-support argument that keeps every factor without a least element among the retained coordinates.

\section{Preliminaries and the complete-lattice input}
All posets and directed sets considered below are nonempty unless explicitly stated otherwise; countable sets may be finite. We write $\N=\{0,1,2,\ldots\}$. A subset $D$ of a poset is \emph{directed} if any two of its elements have a common upper bound in $D$. A \emph{dcpo} is a poset in which every directed set has a supremum. For $A\subseteq P$, put
\[
 \down A=\{x\in P:x\le a\text{ for some }a\in A\},~
 \up A=\{x\in P:a\le x\text{ for some }a\in A\}.
\]
We write $\down x$ and $\up x$ for singletons. An \emph{ideal} is a directed lower set. The set of ideals, ordered by inclusion, is denoted by $\Idl(P)$. An ideal is principal if it has the form $\down x$.

An upper set $U$ in a dcpo $P$ is \emph{Scott open} if $\bigvee D\in U$ implies $D\cap U\ne\emptyset$ for every directed $D\subseteq P$. All Scott open sets form the \emph{Scott topology} $\Sc(P)$, and the resulting space is denoted by $\Sig P$. Its closed sets are precisely the lower sets closed under directed suprema; their collection is $\Gamma(P)$. A nonempty closed set $A$ is \emph{irreducible} if any two open sets meeting $A$ have an intersection meeting $A$. We write $\operatorname{Irr}_c(\Sig P)$ for these sets. The space $\Sig P$ is \emph{sober} if each such $A$ equals $\down a$ for a unique $a\in P$.

A map between dcpos is Scott continuous if and only if it preserves directed suprema. Directed suprema in order products are computed coordinatewise. In particular, the coordinate projections and maps obtained by fixing one coordinate are Scott continuous. These facts concern Scott order products, not an assumed equality with ordinary topological products. For background, see \cite{AbramskyJung,Gierz}.

\begin{lemma}[{\cite[Lemma~1]{LawsonXuLower}}]\label{lem:cofinal-chain}
Every countable directed subset $D$ of a poset contains a countable cofinal chain $C$, so that $\down C=\down D$. If $D$ has no largest element, $C$ can be chosen strictly ascending. The two sets have the same supremum whenever it exists.
\end{lemma}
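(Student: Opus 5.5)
The plan is a direct enumeration argument. Let $D$ be a countable directed subset of a poset, and enumerate it as $D=\{d_0,d_1,d_2,\ldots\}$; if $D$ is finite, repeat elements. We build elements $c_0\le c_1\le c_2\le\cdots$ of $D$ inductively. Start with $c_0=d_0$. Given $c_n$, directedness gives some $c_{n+1}\in D$ that is an upper bound of both $c_n$ and $d_{n+1}$. Put $C=\{c_n:n\in\N\}$. Then $C$ is a chain contained in $D$, so $C$ is countable. Every $d_n$ satisfies $d_n\le c_n$, so $C$ is cofinal in $D$. Together with $C\subseteq D$, this gives $\down C=\down D$.

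For the strictly ascending version, assume $D$ has no largest element. We modify the inductive step. Given $c_n$, first choose $e\in D$ with $e\ge c_n$ and $e\ge d_{n+1}$. Since $D$ has no largest element, $e$ is not the maximum of $D$, so there is $f\in D$ with $f\not\le e$. By directedness pick $c_{n+1}\in D$ above both $e$ and $f$. Then $c_{n+1}\ge c_n$. Equality is impossible: if $c_{n+1}=c_n$, then $f\le c_{n+1}=c_n\le e$, a contradiction. Hence $c_n<c_{n+1}$, so the chain is strictly ascending and still dominates every $d_{n+1}$. It remains cofinal, and $\down C=\down D$ as before. In this case $D$ is automatically infinite, because a finite directed set has a largest element.

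For suprema, note that $\down C=\down D$ means $C$ and $D$ have exactly the same upper bounds. Indeed, an upper bound of $D$ bounds $C\subseteq D$. Conversely, an upper bound of $C$ bounds each $d\le c$ for some $c\in C$. Since the two sets have the same upper bounds, one least upper bound exists if and only if the other does, and then the two coincide.

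There is no genuine obstacle here. The only point needing care is the strict-ascent step: choosing $f\not\le e$ is what guarantees $c_{n+1}\neq c_n$, because merely exceeding $d_{n+1}$ does not ensure strictness.
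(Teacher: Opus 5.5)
Your proof is correct and is essentially the paper's argument: an inductive enumeration that uses directedness to dominate $d_{n+1}$, adds an element not below the current stage to force strict ascent, and ends by noting that $C$ and $D$ have the same upper bounds. The paper picks $e_n\nleq c_n$ and a common upper bound of $c_n,d_{n+1},e_n$ in one step, and uses a singleton when $D$ has a largest element, so your intermediate element $e$ is harmless but unnecessary.
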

\begin{proof}
If $D$ has a largest element, use the corresponding singleton. Otherwise enumerate $D=\{d_n:n\in\N\}$ and set $c_0=d_0$. Given $c_n$, choose $e_n\in D$ with $e_n\nleq c_n$ and a common upper bound $c_{n+1}\in D$ of $c_n,d_{n+1},e_n$. Then $c_n<c_{n+1}$ and $d_i\le c_n$ for $i\le n$. Hence $C=\{c_n:n\in\N\}$ is cofinal in $D$, and $C,D$ have the same upper bounds.
\end{proof}

A complete lattice has suprema and infima of all subsets, including the empty subset. We shall also consider principal ideals having suprema only of nonempty subsets. These two conditions will be kept separate.

\begin{lemma}\label{lem:meet-mc}
If a dcpo $P$ has binary meets, condition \eqref{eq:general-mc} is equivalent to
\begin{equation}\label{eq:meet-mc}
 x\mc\bigvee D=\bigvee_{d\in D}(x\mc d)
 ~(x\in P,\ D\subseteq P\text{ directed}).
\end{equation}
\end{lemma}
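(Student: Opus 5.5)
The plan is to prove the two implications separately. Throughout, $P$ has binary meets, so $\down x\cap\down D=\down\{x\mc d:d\in D\}$. The set $E=\{x\mc d:d\in D\}$ is directed, because $d\mapsto x\mc d$ is monotone and $D$ is directed. In particular $\bigvee E$ exists, and $\bigvee E\le x\mc\bigvee D$ holds in any case. The key observation is that for a directed set $E$ in a dcpo, the Scott closure of $\down E$ is contained in $\down\bigvee E$. Indeed, $\down\bigvee E$ is a lower set closed under directed suprema, so it is Scott closed, and it contains $E$.

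\emph{From \eqref{eq:meet-mc} to \eqref{eq:general-mc}.} Suppose $x\le\bigvee D$. Then $x=x\mc\bigvee D=\bigvee E$. This supremum of a directed subset of $\down E$ lies in every Scott-closed set containing $\down E$. Hence $x\in\cl_{\sigma(P)}(\down E)=\cl_{\sigma(P)}(\down x\cap\down D)$.

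\emph{From \eqref{eq:general-mc} to \eqref{eq:meet-mc}.} Fix $x$ and directed $D$, and put $y=x\mc\bigvee D$. Then $y\le\bigvee D$, so \eqref{eq:general-mc} applied to $y$ gives
\[
 y\in\cl_{\sigma(P)}(\down y\cap\down D)\subseteq\cl_{\sigma(P)}(\down\{y\mc d:d\in D\}).
\]
For $d\in D$ we have $y\mc d\le x\mc d$, since $y\le x$. Therefore $y$ lies in the Scott closure of $\down E$, and by the key observation this closure is contained in $\down\bigvee E$. So $y\le\bigvee_{d}(x\mc d)$. The reverse inequality is the trivial one noted above, which gives equality.

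The only point requiring care is the key observation: the closure of $\down E$ must be controlled by $\down\bigvee E$, which uses that principal downsets are Scott closed. Everything else is routine bookkeeping with monotonicity of $\mc$.
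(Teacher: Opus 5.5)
Your proposal is correct and follows the paper's proof essentially step for step. Both directions rest on $\down x\cap\down D=\down\{x\mc d:d\in D\}$ with this set directed, and on the fact that the Scott closure of $\down E$ for directed $E$ lies in (indeed equals) $\down\bigvee E$; the only cosmetic difference is that you use $y\mc d\le x\mc d$ plus the trivial reverse inequality, where the paper notes directly that $z\mc d=x\mc d$ because $d\le\bigvee D$.
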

\begin{proof}
Assume \eqref{eq:general-mc} and put $z=x\mc\bigvee D$. The set $\down z\cap\down D$ is generated downward by the directed set $\{z\mc d:d\in D\}$. Its Scott closure is the principal ideal of the latter set's supremum. Thus \eqref{eq:general-mc} gives
\[
 z=\bigvee_{d\in D}(z\mc d)=\bigvee_{d\in D}(x\mc d),
\]
since $d\le\bigvee D$. Conversely, if \eqref{eq:meet-mc} holds and $x\le\bigvee D$, the directed set $\{x\mc d:d\in D\}$ lies in $\down x\cap\down D$ and has supremum $x$. This implies \eqref{eq:general-mc}.
\end{proof}

\begin{definition}[{\cite[Section~4]{XuJi}}]\label{def:complete-tests}
A family $\mathcal D$ of directed subsets of a dcpo $P$ is \emph{complete for Scott openness} if an upper set $U\subseteq P$ is Scott open exactly when
\[
 \bigvee D\in U~\Longrightarrow~ D\cap U\ne\emptyset
 ~(D\in\mathcal D).
\]
\end{definition}

The next statement records the inputs from \cite[Theorem~1.1, Propositions~3.3 and~4.1]{XuJi}. In particular, the general product criterion does not require the factors themselves to be countable.

\begin{proposition}[Xu and Ji {\cite{XuJi}}]\label{prop:lattice-input}
\begin{enumerate}[label=\textup{(\arabic*)}]
\item Every countable meet-continuous complete lattice admits a countable family of directed subsets complete for Scott openness.
\item If dcpos $P$ and $Q$ each admit such a countable complete family, then $\Sc(P\times Q)=\Sc(P)\times\Sc(Q)$.
\item Every countable meet-continuous complete lattice has a sober Scott space.
\end{enumerate}
\end{proposition}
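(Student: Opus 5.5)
Since the proposition combines three results of \cite{XuJi}, I would prove the items in order. Item (2) is purely topological, item (1) supplies its hypothesis, and item (3) uses (1) and (2) on the square $L\times L$.

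For (1), let $L$ be a countable meet-continuous complete lattice and $U\subseteq L$ an upper set that is not Scott open. Then there are a directed $D$ with $x=\bigvee D\in U$ and $D\cap U=\emptyset$. By Lemma~\ref{lem:meet-mc}, $\{x\mc d:d\in D\}$ is directed, has supremum $x$, and lies in the lower set $C=L\setminus U$. So it suffices to test directed subsets of $\down x\setminus\{x\}$ with supremum $x$, for each of the countably many $x\in L$. By Lemma~\ref{lem:cofinal-chain}, these may be taken to be strictly ascending chains. The difficulty is that there are uncountably many such chains. My plan is to fix an enumeration $(e_n)$ of $L$ and, for each $x$, use only chains $c_0<c_1<\cdots$ built greedily: $c_{n+1}=c_n\jc e_k$, where $k$ is the least index with $c_n\jc e_k<x$ that advances a prescribed finite pattern. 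Chains of this kind are indexed by finite data and are countable in number. One must then show that whenever $C\cap\down x$ contains some ideal with supremum $x$, one of these canonical chains lies in $C\cap\down x$. The tool here is again meet-continuity, via $x\mc\bigvee=\bigvee(x\mc\cdot)$, which lets one replace an arbitrary witness chain by canonical joins below it. I expect this selection step to be the main obstacle, and I would first try an induction on the enumeration index, in the style of a Rasiowa--Sikorski argument.

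For (2), the inclusion $\Sc(P)\times\Sc(Q)\subseteq\Sc(P\times Q)$ is standard. For the converse, take $U\in\Sc(P\times Q)$ and $(p,q)\in U$. Enumerate the countable complete families $\mathcal D_P=\{D_n\}$ and $\mathcal D_Q=\{E_n\}$. I would build, by a diagonal recursion, an increasing sequence of finite sets $F_n\subseteq P$ and $G_n\subseteq Q$ with $\up F_n\times\up G_n\subseteq U$, where at stage $n$ one adds an element of $D_n$ or of $E_n$ whenever its supremum is already covered. Each such addition is possible because Scott openness of the sections $U_y=\{x:(x,y)\in U\}$ holds, the maps fixing one coordinate being Scott continuous. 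Moreover, finitely many previously chosen points can be handled simultaneously since $U$ is Scott open in the product and the relevant finite product of directed sets is directed. Then $V=\bigcup_n\up F_n$ and $W=\bigcup_n\up G_n$ pass every test in $\mathcal D_P$, respectively $\mathcal D_Q$, so completeness gives $V\in\Sc(P)$, $W\in\Sc(Q)$, and $(p,q)\in V\times W\subseteq U$.

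For (3), let $A\in\operatorname{Irr}_c(\Sig L)$. By (1) and (2), $\Sig(L\times L)=\Sig L\times\Sig L$, so $A\times A$ is irreducible closed in $\Sig(L\times L)$. The join map $\jc\colon L\times L\to L$ preserves directed suprema and is therefore continuous. Hence $B=\cl(\jc(A\times A))$ is irreducible and contains $A$. I would then show $B\subseteq A$, i.e.\ $a\jc b\in A$ for $a,b\in A$. Suppose not; then $U=L\setminus A$ is a Scott open set containing $a\jc b$. Using meet-continuity, choose opens $\up$-neighbourhoods meeting $A$ near $a$ and near $b$, and apply irreducibility to obtain a point of $A$ above elements whose join lies in $U$, which is a contradiction. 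Hence $A$ is closed under finite joins, so it is directed. Its supremum $s$ lies in $A$ because $A$ is closed under directed suprema, which gives $A=\down s$, and uniqueness follows from antisymmetry.
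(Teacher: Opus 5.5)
\textbf{There is a genuine gap in item (1), which is the heart of the proposition.} Everything after your reduction to chains below $x$ is a hope rather than an argument. Your canonical chains $c_{n+1}=c_n\vee e_k$ are fixed by the enumeration and a finite pattern before $U$ is given. The only constraint, $c_n\vee e_k<x$, says nothing about the arbitrary lower set $L\setminus U$. You give no mechanism forcing any of these chains into $L\setminus U$, and you say yourself that this selection step is open.

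The missing idea, from \cite{XuJi} and partly reproduced in Section~\ref{sec:semilattice}, is not to test at $x=\bigvee D$ at all:
\begin{enumerate}[label=\textup{(\roman*)}]
\item Extend $D$ to a maximal ideal $J$ of $L\setminus U$. It is nonprincipal, because $\bigvee J\ge x$ lies in $U$.
\item Take a cofinal ascending sequence $(p_n)$ in $J$. The meet-coordinate diagonal argument of Lemma~\ref{lem:semilattice-isolation} isolates some $t\in\down{\bigvee J}\setminus J$ by a single coordinate $p_m$. This is where meet-continuity, in the form $y=\bigvee_n(y\wedge p_n)$, and countability of $\down{\bigvee J}$ are used.
\item Proposition~\ref{prop:semilattice-puncture} turns this into $s<t$ with $[s,t)$ directed, $[s,t)\subseteq J$ and $\bigvee[s,t)=t\notin J$.
\item By maximality of $J$ there is $d\in J$, which may be taken above $s$, with $t\vee d\in U$.
\item The set $D_{t,d}=\{y\vee d:t\wedge d\le y<t\}$ is directed, lies in $J$, and has supremum $t\vee d\in U$. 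So $U$ fails this test. The tests are indexed by pairs $(t,d)$, hence are countably many.
\end{enumerate}
The failed test has supremum $t\vee d$, which in general is not $x$. Countability comes from join translation inside a maximal ideal, not from a per-point family of chains converging to $x$.

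Item (3) uses $\Sc(L\times L)=\Sc(L)\times\Sc(L)$, which you obtain from (1) and (2), so it inherits this gap. Apart from that, your (2) and (3) follow the route the paper cites: the successive finite-selection argument of \cite{Miao}, and joint continuity of binary join. Two local repairs are still needed.
\begin{enumerate}[label=\textup{(\alph*)}]
\item In (2), each test must be revisited infinitely often. For example, enumerate $\mathcal D_P$ and $\mathcal D_Q$ with infinite repetition, or treat $D_0,\dots,D_n$ and $E_0,\dots,E_n$ at stage $n$. The reason is that $\bigvee D_n$ may enter $\bigcup_k\up F_k$ only after stage $n$, and then $V$ could fail the test $D_n$.
\item In (3), the contradiction step rests on joint continuity of $\vee$, not on meet-continuity. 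There are Scott open sets $V\ni a$ and $W\ni b$ with $V\vee W\subseteq L\setminus A$. Irreducibility gives $c\in A\cap V\cap W$, and then $c=c\vee c\in L\setminus A$, a contradiction. The detour through the closure $B$ is unnecessary.
\end{enumerate}
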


In \cite{XuJi}, the first assertion is proved by a meet-coordinate separation argument, extraction of directed punctured intervals, and join translation inside a maximal ideal avoiding a given upper set. The second assertion is a successive finite-selection argument, adapted from \cite[Lemma~4.1]{Miao}. The third follows by making binary join jointly continuous. We do not reproduce those complete-lattice and rectangle proofs here. The new step for $L$-dcpos is to localize the first assertion and combine the resulting families.

\section{\texorpdfstring{$L$}{L}-dcpos and local test families}\label{sec:local-tests}
\begin{definition}[{\cite[Definition~5.1]{JiaJungLi}}]\label{def:L-dcpo}
A dcpo $P$ is an \emph{$L$-dcpo} if every principal ideal $\down r$, $r\in P$, is a complete lattice in its induced order.
\end{definition}

No global least element is required in Definition~\ref{def:L-dcpo}. For example, an antichain with at least two elements is an $L$-dcpo but has no least element. Each principal ideal must nevertheless have its own least element. Thus the definition does not discard the empty supremum inside $\down r$. This terminology, from \cite{JiaJungLi}, extends the local lattice condition for $L$-domains discussed in \cite[Section~4.1]{AbramskyJung}, without assuming continuity. We therefore keep the name $L$-dcpo for this structural condition, even when the ambient dcpo is called a meet-continuous domain.

For $x,y\le r$, write $x\jc_r y$ and $x\mc_r y$ for their join and meet in $\down r$. The join may depend on $r$ and need not be a join in $P$. The meet is a meet in $P$, since every common lower bound of $x,y$ lies in $\down r$. Pairs without a common upper bound are not required to have a meet.

\begin{lemma}\label{lem:principal-localization}
Let $P$ be a dcpo and $r\in P$.
\begin{enumerate}[label=\textup{(\arabic*)}]
\item The principal ideal $\down r$ is a sub-dcpo, and its Scott topology is the subspace topology induced by $\Sig P$.
\item $P$ is meet-continuous if and only if each $\down r$ is meet-continuous.
\end{enumerate}
\end{lemma}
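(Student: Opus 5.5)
For (1), I will first check that $\down r$ is a sub-dcpo. A directed $D\subseteq\down r$ has $r$ as an upper bound, so $\bigvee D\le r$. The supremum computed in $P$ therefore lies in $\down r$ and is also its supremum there. For the topologies I will compare closed sets, using that Scott-closed sets are the lower sets closed under directed suprema. If $F\in\Gamma(P)$, then $F\cap\down r$ is a lower subset of $\down r$. It is closed under directed suprema because these agree with the ones in $P$. Conversely, let $A$ be Scott closed in $\down r$. Since $\down r$ is a lower set in $P$, $A$ is already a lower set of $P$. Every directed subset of $A$ lies in $\down r$, with the same supremum, so $A\in\Gamma(P)$ and $A=A\cap\down r$. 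Hence the closed sets of $\Sig(\down r)$ are exactly the traces of closed sets of $\Sig P$. I will also record a byproduct: since $\down r$ is Scott closed in $P$, the closure in $\Sig(\down r)$ of any $B\subseteq\down r$ equals $\cl_{\sigma(P)}(B)$.

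For (2), suppose first that $P$ is meet-continuous. Take $x\in\down r$ and a directed $D\subseteq\down r$ with $x\le\bigvee D$. The set $\down x\cap\down D$ is the same whether lower sets are taken in $\down r$ or in $P$, because $\down r$ is a lower set. Condition \eqref{eq:general-mc} in $P$ puts $x$ in the $P$-closure of this set. By the byproduct above, this $P$-closure is the $\down r$-closure, so $\down r$ is meet-continuous.

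Conversely, suppose each $\down r$ is meet-continuous. Take $x\in P$ and a directed $D$ with $x\le\bigvee D$, and put $r=\bigvee D$. Then $x\in\down r$ and $D\subseteq\down r$, so meet-continuity of $\down r$ gives $x\in\cl_{\sigma(\down r)}(\down x\cap\down D)$. By the byproduct this is the $P$-closure, which is \eqref{eq:general-mc}.

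The only step needing care is that closures in the subspace coincide with closures in $P$. That rests entirely on $\down r$ being a Scott-closed set, which holds because any directed subset has a supremum below $r$. I do not expect any real obstacle.
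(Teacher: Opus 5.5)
Your proof is correct and follows the paper's argument essentially step by step: you compare Scott-closed sets for (1), and for (2) you transfer condition \eqref{eq:general-mc} between $\down r$ and $P$, using that lower sets and closures computed in the two dcpos agree. The only cosmetic difference is that you use the equality $\cl_{\sigma(\down r)}(B)=\cl_{\sigma(P)}(B)$, which holds because $\down r$ is Scott closed, in both directions, whereas the paper uses the subspace description in one direction and continuity of the inclusion in the other.
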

\begin{proof}
Any directed subset of $\down r$ has its supremum in $\down r$, computed as in $P$. If $A$ is Scott closed in $\down r$, it is lower in $P$, because $\down r$ is lower, and it is closed under directed suprema in $P$. Thus $A$ is Scott closed in $P$. Conversely, intersections of Scott-closed subsets of $P$ with $\down r$ are Scott closed in $\down r$. This proves~(1).

Suppose $P$ is meet-continuous, $D\subseteq \down r$ is directed and $x\in \down r$ satisfies $x\le\bigvee D$. The set $\down x\cap\down D$ is contained in $\down r$, and its downsets computed in $P$ and in $\down r$ agree. Apply \eqref{eq:general-mc} and the subspace equality in~(1) to obtain meet-continuity of $\down r$. Conversely, for $x\le\bigvee D$ in $P$, set $r=\bigvee D$ and apply meet-continuity in $\down r$; its inclusion into $P$ is continuous, so \eqref{eq:general-mc} follows.
\end{proof}

\begin{proposition}\label{prop:local-tests}
Let $P$ be a countable dcpo. Suppose that each $\down r$, $r\in P$, admits a countable family $\mathcal D_r$ of directed sets complete for its Scott openness. Then
\begin{equation}\label{eq:local-test-union}
 \mathcal D_P=\bigcup_{r\in P}\mathcal D_r
\end{equation}
is a countable family of directed subsets of $P$ complete for Scott openness.
\end{proposition}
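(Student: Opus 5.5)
Countability of $\mathcal D_P$ is immediate: $P$ is countable and each $\mathcal D_r$ is countable, so the union in \eqref{eq:local-test-union} is a countable union of countable families. Each member of $\mathcal D_r$ is a directed subset of $\down r$, hence a directed subset of $P$. By Lemma~\ref{lem:principal-localization}(1) its supremum in $\down r$ equals its supremum in $P$. So the test condition for $D\in\mathcal D_r$ means the same thing whether we read $\bigvee D$ in $\down r$ or in $P$.

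One direction of completeness is trivial. If $U$ is Scott open in $P$, it satisfies the defining condition for every directed $D\subseteq P$, in particular for every $D\in\mathcal D_P$.

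For the converse, let $U\subseteq P$ be an upper set with
\[
 \bigvee D\in U~\Longrightarrow~ D\cap U\ne\emptyset
 \qquad(D\in\mathcal D_P),
\]
and let $E\subseteq P$ be an arbitrary directed set with $r:=\bigvee E\in U$. The plan is to localize at this $r$ and proceed in three steps.
\begin{enumerate}[label=\textup{(\roman*)}]
\item Set $U_r=U\cap\down r$. This is an upper set in $\down r$: if $u\in U_r$ and $u\le v\le r$, then $v\in U$ because $U$ is upper, and $v\in\down r$.
\item For every $D\in\mathcal D_r$ we have $D\subseteq\down r$ and $\bigvee D$ computed in $\down r$, as noted above. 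So $\bigvee D\in U_r$ gives $\bigvee D\in U$, hence $D\cap U\ne\emptyset$, hence $D\cap U_r\ne\emptyset$.
\item Since $\mathcal D_r$ is complete for Scott openness of $\down r$, step~(ii) shows that $U_r$ is Scott open in $\down r$.
\end{enumerate}
Now $E\subseteq\down r$ is directed with supremum $r\in U_r$. Scott openness of $U_r$ in $\down r$ then yields $E\cap U_r\ne\emptyset$, so $E\cap U\ne\emptyset$. Since $E$ was arbitrary, $U$ is Scott open in $P$.

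There is no real obstacle here. The only care needed is that suprema of directed subsets of $\down r$ agree in $\down r$ and in $P$, which Lemma~\ref{lem:principal-localization}(1) supplies, and that we localize at the supremum of the arbitrary directed set $E$ itself. This choice of $r$ is what makes the local families suffice and why we need the union over all $r\in P$. Meet-continuity and the $L$-dcpo hypothesis are not used at this stage; they only enter when Proposition~\ref{prop:lattice-input}(1) is invoked to produce the families $\mathcal D_r$.
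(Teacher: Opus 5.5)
Your proof is correct and follows essentially the same route as the paper: restrict $U$ to $U\cap\down r$, use completeness of $\mathcal D_r$ to get Scott openness there, and then test an arbitrary directed set inside the principal ideal of its own supremum. Where the paper's proof is terse, you make explicit that $U\cap\down r$ is an upper set of $\down r$ and that suprema of members of $\mathcal D_r$ agree in $\down r$ and in $P$.
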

\begin{proof}
The family is countable, and each of its members has the same directed supremum in its principal ideal and in $P$. Necessity of the tests is immediate. Suppose that an upper set $U\subseteq P$ passes every test in $\mathcal D_P$. For each $r$, the upper set $U\cap\down r$ passes every test in $\mathcal D_r$, and hence is Scott open in $\down r$. Given a directed $D\subseteq P$ with $r=\bigvee D\in U$, work inside $\down r$ to obtain $D\cap U\ne\emptyset$. Thus $U$ is Scott open in $P$.
\end{proof}

\begin{theorem}\label{thm:L-dcpo-products}
Every countable meet-continuous $L$-dcpo admits a countable family of directed subsets complete for Scott openness. Consequently, for every finite family $P_1,\ldots,P_n$ of countable meet-continuous $L$-dcpos,
\begin{equation}\label{eq:L-dcpo-products}
 \Sc\!\left(\prod_{i=1}^{n}P_i\right)
 =\prod_{i=1}^{n}\Sc(P_i).
\end{equation}
No global least elements are assumed.
\end{theorem}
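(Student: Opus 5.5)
The first assertion will follow by combining Lemma~\ref{lem:principal-localization}, Proposition~\ref{prop:lattice-input}(1) and Proposition~\ref{prop:local-tests}. The product equality will then follow from Proposition~\ref{prop:lattice-input}(2), applied inductively.

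Let $P$ be a countable meet-continuous $L$-dcpo and fix $r\in P$.

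\begin{itemize}
\item By Definition~\ref{def:L-dcpo}, $\down r$ is a complete lattice in the induced order.
\item It is countable, being a subset of $P$.
\item By Lemma~\ref{lem:principal-localization}(2), it is meet-continuous as a dcpo.
\end{itemize}

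One point needs checking here. Proposition~\ref{prop:lattice-input}(1) is stated for meet-continuous complete lattices in the lattice sense, namely the directed distributive law \eqref{eq:meet-mc}. A complete lattice has binary meets, so Lemma~\ref{lem:meet-mc} identifies \eqref{eq:general-mc} with \eqref{eq:meet-mc} inside $\down r$. Also, directed suprema in $\down r$ agree with those in $P$, as noted in the proof of Lemma~\ref{lem:principal-localization}.

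Proposition~\ref{prop:lattice-input}(1) therefore supplies a countable family $\mathcal D_r$ of directed subsets of $\down r$ that is complete for the Scott openness of $\down r$. Since $P$ is countable, Proposition~\ref{prop:local-tests} shows that $\mathcal D_P=\bigcup_r\mathcal D_r$ is a countable family complete for Scott openness of $P$. This proves the first assertion.

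For the product statement I induct on $n$. The case $n=1$ is trivial. For the inductive step, put $Q=\prod_{i=1}^{n-1}P_i$. Proposition~\ref{prop:lattice-input}(2) applies to any two dcpos that admit countable complete families; the factors need not be countable or lattices. So it suffices to show that $Q$ admits such a family. Then
\[
\Sc(Q\times P_n)=\Sc(Q)\times\Sc(P_n)=\prod_{i=1}^{n}\Sc(P_i).
\]
The last equality uses the induction hypothesis $\Sc(Q)=\prod_{i<n}\Sc(P_i)$ and the identification of $\prod_{i\le n}P_i$ with $Q\times P_n$ as ordered sets.

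There are two routes to a countable complete family on $Q$. The simplest is to note that $Q$ is itself a countable meet-continuous $L$-dcpo and apply the first assertion directly.

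\begin{itemize}
\item Countability is clear for a finite product.
\item Principal ideals $\down(r_1,\dots,r_{n-1})=\prod_i\down r_i$ are products of complete lattices, hence complete lattices.
\item Meet-continuity passes to the finite product. Inside each principal ideal, meets and directed suprema are computed coordinatewise, so \eqref{eq:meet-mc} holds coordinatewise. Lemma~\ref{lem:principal-localization}(2) then gives meet-continuity of $Q$.
\end{itemize}

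With this route the induction is immediate. The alternative is to build the family on $Q$ from the families on the factors using the rectangle argument of \cite{XuJi}, but that proof is not reproduced here, so I prefer the first route.

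The main obstacle is the compatibility step: confirming that the hypotheses of the cited Proposition~\ref{prop:lattice-input}(1) are really met by each $\down r$. Its notion of meet-continuity must match \eqref{eq:general-mc} via Lemma~\ref{lem:meet-mc}, and its notion of ``complete for Scott openness'' must match the family from Definition~\ref{def:complete-tests} read inside $\down r$, whose Scott topology is the subspace one by Lemma~\ref{lem:principal-localization}(1). A second point to check is that meet-continuity of the finite product holds without least elements. The principal-ideal localization avoids any appeal to global bottoms, which is why the statement needs no least-element assumption.
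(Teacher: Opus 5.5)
Your proposal is correct and follows the paper's proof essentially step for step. You localize to each $\down r$ via Lemma~\ref{lem:principal-localization}, take the Xu--Ji families from Proposition~\ref{prop:lattice-input}(1), and merge them with Proposition~\ref{prop:local-tests}; you then induct using that finite products of countable meet-continuous $L$-dcpos are again such (their principal ideals being $\prod_i\down r_i$), together with Proposition~\ref{prop:lattice-input}(2). Your explicit check via Lemma~\ref{lem:meet-mc} that the two notions of meet-continuity agree on $\down r$ is a point the paper leaves implicit, and starting the induction at $n=1$ rather than at the empty product is immaterial.
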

\begin{proof}
For a countable meet-continuous $L$-dcpo $P$, Lemma~\ref{lem:principal-localization} makes every principal ideal $\down r$ a countable meet-continuous complete lattice. Proposition~\ref{prop:lattice-input}\textup{(1)} supplies a countable complete test family on each $\down r$, and Proposition~\ref{prop:local-tests} combines them into one for $P$. Proposition~\ref{prop:lattice-input}\textup{(2)} now proves the binary product equality.

Finite products are again countable $L$-dcpos, since
\[
 \down(r_1,\ldots,r_n)=\prod_{i=1}^{n}\down r_i.
\]
Each principal ideal on the right is a meet-continuous complete lattice, with operations and directed suprema computed coordinatewise. Lemma~\ref{lem:principal-localization} therefore gives meet-continuity of the product. Induction proves \eqref{eq:L-dcpo-products}; the empty product is a singleton.
\end{proof}

The local tests may be described explicitly using the construction of \cite[Proposition~3.3]{XuJi}. For $\down r$, put
\[
 \mathcal Q_{\down r}=\{(t,d)\in \down r\times \down r : t\nleq d,\ [t\mc_r d,t)
     \text{ is directed and }\bigvee[t\mc_r d,t)=t\}.
\]
The resulting tests are indexed by triples $(r,t,d)$ with $(t,d)\in\mathcal Q_{\down r}$, and are
\[
 D^{\,r}_{t,d}=\{x\jc_r d:t\mc_r d\le x<t\}.
\]
All lattice operations here take place inside $\down r$. Thus no global binary join is being used. Theorem~\ref{thm:L-dcpo-products} concerns finite products. The exact extent to which it extends to arbitrary products is determined in Theorem~\ref{thm:arbitrary-L-products}; Example~\ref{ex:antichain-infinite-product} records the two-point-antichain obstruction already noted in \cite[Section~5]{XuJi}.

\section{Least elements and bounded completeness}\label{sec:bottom}
One can weaken the local completeness assumption further, but it is useful to keep this weakening distinct from the established definition of an $L$-dcpo.

\begin{lemma}\label{lem:adjoin-bottom}
Let $P_\bot=P\cup\{\bot\}$ be obtained from a dcpo $P$ by adjoining a new least element. Then $P_\bot$ is a dcpo,
\begin{equation}\label{eq:bottom-scott}
 \Sc(P_\bot)=\Sc(P)\cup\{P_\bot\},
\end{equation}
where subsets of $P$ are viewed as subsets of $P_\bot$. Moreover, $P$ is meet-continuous if and only if $P_\bot$ is meet-continuous, and $\Sig P$ is sober if and only if $\Sig P_\bot$ is sober.
\end{lemma}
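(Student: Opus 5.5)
I will verify the four assertions in turn, working directly from the definitions. Everything rests on one observation: directed subsets of $P_\bot$ are of two kinds, namely $\{\bot\}$ itself or sets $D$ with $D\setminus\{\bot\}$ nonempty, directed and cofinal in $D$.

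\emph{Directed completeness.} In the second case the supremum of $D$ in $P_\bot$ is the supremum of $D\setminus\{\bot\}$ computed in $P$. This also shows that directed suprema of subsets of $P$ are the same in $P$ and in $P_\bot$.

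\emph{Equation~\eqref{eq:bottom-scott}.} Let $U$ be Scott open in $P_\bot$. If $\bot\in U$, then $U=P_\bot$ because $U$ is an upper set. Otherwise $U\subseteq P$; it is an upper set in $P$, and it is Scott open there by the agreement of suprema just noted. Conversely, the whole space $P_\bot$ is open. A Scott open $U\subseteq P$ is an upper set in $P_\bot$, since nothing lies above an element of $P$ except elements of $P$. For a directed $D\subseteq P_\bot$ with $\bigvee D\in U$, we have $D\ne\{\bot\}$, so $D\setminus\{\bot\}$ meets $U$. Dually, the Scott-closed sets of $P_\bot$ are $\emptyset$ and the sets $A\cup\{\bot\}$ with $A\in\Gamma(P)$. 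This gives $\cl_{\sigma(P_\bot)}(B)=\cl_{\sigma(P)}(B\setminus\{\bot\})\cup\{\bot\}$ for nonempty $B$.

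\emph{Meet-continuity.} Consider $x\le\bigvee D$ in $P_\bot$. The case $x=\bot$ is trivial, since $\bot$ lies in every nonempty closed set and $\down x\cap\down D\ni\bot$. If $x\in P$, then $D$ is not $\{\bot\}$. Put $D'=D\setminus\{\bot\}$. Then $\down_{P_\bot}x\cap\down_{P_\bot}D=(\down x\cap\down D')\cup\{\bot\}$, so by the closure formula, $x$ lies in the $P_\bot$-closure of the left side exactly when it lies in the $P$-closure of $\down x\cap\down D'$. One direction also needs that every directed $D'\subseteq P$ is directed in $P_\bot$, which is clear. Together these give both implications.

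\emph{Sobriety.} This is the step needing the most care, so I would handle it through the correspondence $A\mapsto A\cup\{\bot\}$ between $\Gamma(P)$ and the nonempty members of $\Gamma(P_\bot)$. The open sets meeting $A\cup\{\bot\}$ are $P_\bot$ together with the $P$-open sets meeting $A$. So for nonempty $A$, irreducibility of $A$ in $\Sig P$ matches irreducibility of $A\cup\{\bot\}$: any pair involving $P_\bot$ is harmless. The remaining irreducible closed set of $P_\bot$ is $\{\bot\}=\down\bot$, which is principal. Also $\down_{P_\bot}a=\down a\cup\{\bot\}$ for $a\in P$. Hence every irreducible closed set is principal in one space iff it is so in the other. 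Uniqueness of the generating point holds automatically in both spaces, because both are $T_0$. The only point to check carefully is that $\{\bot\}$ is not produced from $A=\emptyset$ in a way that breaks the matching; since irreducible sets are nonempty by definition, it is not.
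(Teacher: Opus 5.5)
Your proof is correct and follows the paper's argument. You delete $\bot$ from directed sets, identify the nonempty Scott-closed sets of $P_\bot$ as $A\cup\{\bot\}$ with $A\in\Gamma(P)$, and match irreducibility and principality, with $\{\bot\}=\down\bot$ as the only extra case. Your closure formula $\cl_{\sigma(P_\bot)}(B)=\cl_{\sigma(P)}(B\setminus\{\bot\})\cup\{\bot\}$ simply spells out the paper's reduction of meet-continuity through the Scott-open neighbourhoods of $x$.
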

\begin{proof}
If a directed set in $P_\bot$ has a point other than $\bot$, deleting $\bot$ leaves a nonempty directed set in $P$ with the same supremum. This proves directed completeness and \eqref{eq:bottom-scott}. For meet-continuity at an old point $x\in P$, the same deletion and the Scott-open neighbourhoods of $x$ reduce \eqref{eq:general-mc} to the corresponding condition in $P$. At $\bot$, the condition is automatic, since $\bot\in\down\bot\cap\down D$ for every nonempty directed $D$.

Every nonempty Scott-closed subset of $P_\bot$ has the form $\{\bot\}\cup A$, where $A$ is Scott closed in $P$. If $A\ne\emptyset$, this set is irreducible exactly when $A$ is irreducible, by \eqref{eq:bottom-scott}. Its being a principal ideal is likewise equivalent to $A$ being principal in $P$. The remaining nonempty case is $\{\bot\}$, itself principal. This proves the assertion about sobriety.
\end{proof}

\begin{corollary}\label{cor:bottom-free-local-products}
Let $P_1,\ldots,P_n$ be countable meet-continuous domains. Suppose that, for each $i$ and $r\in P_i$, every nonempty subset of $\down r$ has a supremum in $\down r$. Then
\[
 \Sc\!\left(\prod_{i=1}^{n}P_i\right)
 =\prod_{i=1}^{n}\Sc(P_i).
\]
Neither global least elements nor least elements of the principal ideals are required.
\end{corollary}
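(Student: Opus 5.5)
The plan is to reduce to Theorem~\ref{thm:L-dcpo-products} by adjoining a new least element to each factor, and then to strip the added bottoms again using \eqref{eq:bottom-scott}.

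\emph{Step 1: each $(P_i)_\bot$ is a countable meet-continuous $L$-dcpo.} Put $Q_i=(P_i)_\bot$. It is countable, and by Lemma~\ref{lem:adjoin-bottom} it is a meet-continuous dcpo. It remains to check that every principal ideal of $Q_i$ is a complete lattice.
\begin{itemize}
\item The ideal $\down\bot=\{\bot\}$ is trivially a complete lattice.
\item For $r\in P_i$, the ideal of $r$ in $Q_i$ is $\down r\cup\{\bot\}$.
\item The empty subset and $\{\bot\}$ have supremum $\bot$.
\item Any other subset $S$ contains a nonempty part $S\setminus\{\bot\}\subseteq\down r$. By hypothesis this part has a supremum in $\down r$, and that element is also the supremum of $S$ in $\down r\cup\{\bot\}$.
\end{itemize}
A poset with all suprema is a complete lattice, so Step~1 is done.

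\emph{Step 2: apply the finite-product theorem.} Theorem~\ref{thm:L-dcpo-products} now gives $\Sc(\prod_i Q_i)=\prod_i\Sc(Q_i)$.

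\emph{Step 3: transfer back to $P=\prod_i P_i$, viewed as the subset $\prod_i P_i\subseteq Q=\prod_i Q_i$.} We show both topologies on $P$ are the traces of the corresponding topologies on $Q$.

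For the product topology: by \eqref{eq:bottom-scott}, $\Sc(Q_i)$ restricted to $P_i$ is exactly $\Sc(P_i)$. Since traces of product topologies are products of traces, $\prod\Sc(P_i)$ is the trace of $\prod\Sc(Q_i)$ on $P$.

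For the Scott topology, I claim $\Sc(P)$ is the trace of $\Sc(Q)$ on $P$, and I expect this to be the main point.
\begin{itemize}
\item $P$ is an upper set in $Q$. A directed set in $Q$ whose supremum lies in $P$ has an eventual (cofinal) part lying in $P$. Indeed, for each coordinate some element is non-$\bot$ there, and directedness gives a single element that is non-$\bot$ in all of the finitely many coordinates; everything above it stays in $P$.
\item Hence a Scott open $U\subseteq P$ is Scott open in $Q$. Conversely, the trace on $P$ of a Scott open subset of $Q$ is Scott open in $P$, because directed suprema of subsets of $P$ are computed identically in $P$ and in $Q$.
\end{itemize}
So $\Sc(P)$ is the trace of $\Sc(Q)$, and it equals the trace of $\prod\Sc(Q_i)$, which is $\prod\Sc(P_i)$.

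The care needed is mostly in the finiteness used when choosing the common non-$\bot$ element, and in noting that the case $P_i=\emptyset$ does not arise, since dcpos are nonempty here.
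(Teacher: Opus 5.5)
Your proposal is correct and follows essentially the same route as the paper. You lift each factor to $(P_i)_\bot$, check that its principal ideals $\{\bot\}\cup\down r$ are complete lattices, apply Theorem~\ref{thm:L-dcpo-products}, and restrict back to the Scott-open subset $\prod_i P_i$ of $\prod_i (P_i)_\bot$; your cofinal-tail argument, which uses finiteness of the index set, and your trace computation via \eqref{eq:bottom-scott} just spell out the restriction step that the paper states briefly.
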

\begin{proof}
The stated local condition is equivalent to $(P_i)_\bot$ being an $L$-dcpo. Indeed, for an old point $r$, the principal ideal in $(P_i)_\bot$ is $\{\bot\}\cup\down_{P_i}r$. The assumed nonempty suprema, together with the new bottom, provide suprema of all its subsets, so it is a complete lattice. Conversely, the supremum of a nonempty subset of $\down_{P_i}r$ in this lattice cannot be the new bottom, and is therefore a supremum in $\down_{P_i}r$.

By Lemma~\ref{lem:adjoin-bottom}, the lifted dcpos are countable, meet-continuous $L$-dcpos. Apply Theorem~\ref{thm:L-dcpo-products} to their finite product. Each $P_i$ is Scott open in $(P_i)_\bot$, and $\prod_iP_i$ is Scott open in $\prod_i(P_i)_\bot$. On a Scott-open subset $O$ of a dcpo, the intrinsic Scott topology is the subspace topology: a directed set with supremum in $O$ has a cofinal tail in $O$, which tests openness inside $O$. Restricting the product equality for the lifted dcpos therefore gives the desired equality.
\end{proof}

In particular, the corollary applies if each principal ideal is merely a lattice, without a least element: it is already a dcpo, so joins of nonempty finite subsets form a directed set whose supremum is the join of the entire nonempty subset. For example, the nonpositive integers with their usual order satisfy this weaker hypothesis but are not an $L$-dcpo as defined above.

The next hypothesis concerns a supremum in the whole poset, not merely a join computed below one selected upper bound. It is therefore stronger than the local condition in Corollary~\ref{cor:bottom-free-local-products}.

\begin{proposition}\label{prop:bounded-complete-sober}
Let $P$ be a countable meet-continuous domain such that every nonempty upper-bounded subset of $P$ has a supremum in $P$. Then $\Sig P$ is sober. Finite products of such dcpos also satisfy the Scott product equality. No least element of $P$ is assumed.
\end{proposition}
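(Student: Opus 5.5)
Adjoin a bottom and reduce to the complete-lattice case. Put $L=P_\bot$. Every subset of $L$ with an upper bound has a supremum in $L$: the empty set and $\{\bot\}$ have supremum $\bot$, and any other upper-bounded subset $S$ gives the nonempty upper-bounded subset $S\setminus\{\bot\}$ of $P$, whose supremum exists by hypothesis. In particular every principal ideal of $L$ is a complete lattice. By Lemma~\ref{lem:adjoin-bottom}, $L$ is a countable meet-continuous $L$-dcpo, and $\Sig P$ is sober iff $\Sig L$ is sober. So it suffices to treat $L$, which is now a bounded complete dcpo.

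Finite products are immediate. The hypothesis implies the local hypothesis of Corollary~\ref{cor:bottom-free-local-products}: a nonempty subset of $\down r$ is bounded by $r$, so its global supremum lies in $\down r$ and is its supremum there. Hence the product equality holds.

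For sobriety I would follow the Lawson--Xu common-upper-bound strategy. Bounded completeness makes $\up x\cap\up y$ either empty or equal to $\up(x\jc y)$, with $x\jc y$ a global join.

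\begin{enumerate}[label=\textup{(\roman*)}]
\item Let $A\subseteq L$ be an irreducible Scott-closed set.
\item The set $A$ is directed. Take $a,b\in A$. The square equality $\Sc(L\times L)=\Sc(L)\times\Sc(L)$ holds by the finite-product case, so $A\times A$ is irreducible closed in $\Sig(L\times L)$. Consider the set
\[
E=\{(x,y)\in L\times L:\ x,y\ \text{have an upper bound}\}.
\]
\item $E$ is Scott closed in $L\times L$. It is a lower set. If $(x_i,y_i)$ is directed in $E$, the joins $x_i\jc y_i$ form a directed family, since $i\le j$ gives $x_i\jc y_i\le x_j\jc y_j$. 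Its supremum bounds $\bigvee x_i$ and $\bigvee y_i$. The join map $j\colon E\to L$ is Scott continuous on $E$, because $\bigvee_i(x_i\jc y_i)=\bigvee x_i\jc\bigvee y_i$.
\item $A\times A\subseteq E$. If some $(a,b)\in A\times A$ had no upper bound, the nonempty open set $(A\times A)\setminus E$ would give, by irreducibility, a contradiction. The intended route is the standard one: the image of an irreducible set under a continuous map is irreducible. Applying this to the projections together with the diagonal shows that pairs from $A$ are bounded. Concretely, pick Scott-open $U\ni a$ and $V\ni b$. By irreducibility, $U\cap V\cap A\ne\emptyset$, giving $c\in A$ above points of both. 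I expect to need meet-continuity and a countable cofinal-chain argument (Lemma~\ref{lem:cofinal-chain}) here to upgrade ``every pair of neighbourhoods meets in $A$'' to ``$a,b$ have a common upper bound in $A$''.
\item Then $j(A\times A)$ is irreducible, and its closure $B$ satisfies $A\subseteq B$. Each $a\jc b$ lies in the closure of $A$, which is $A$ itself, using joint continuity of $\jc$ on $E$ from the product equality. So $A$ is closed under binary joins and hence directed.
\item Set $s=\bigvee A$. Then $s\in A$, so $A=\down s$ and $\Sig L$ is sober.
\end{enumerate}

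The main obstacle is step (iv): showing that elements of an irreducible closed set are pairwise bounded, so that the partial join is defined on $A\times A$. This step is needed before joint continuity of $\jc$ (from the square product equality, as in Proposition~\ref{prop:lattice-input}(3)) can be used. My first attempt would be to show directly that $E$ is Scott closed with $\jc\colon E\to L$ jointly continuous, and that the complement of $E$ is Scott open in $L\times L=\Sig L\times\Sig L$. If $(a,b)\notin E$, there would then be open $U\ni a$ and $V\ni b$ with $U\times V$ disjoint from $E$. Irreducibility of $A$ yields $c\in U\cap V\cap A$, so $(c,c)\in (U\times V)\cap E$, which is a contradiction.
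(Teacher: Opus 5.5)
Your proposal is correct, but it reaches sobriety by a different route from the paper. The paper adjoins a bottom and then a top, making $P_\bot$ a Scott-closed subset of the countable complete lattice $K=P_\bot\cup\{\top\}$. It checks that $K$ is meet-continuous: binary meets exist in $P_\bot$ as suprema of common lower bounds, and \eqref{eq:meet-mc} survives adjoining $\top$. It then invokes the Xu--Ji sobriety theorem, Proposition~\ref{prop:lattice-input}(3), as a black box, and finishes with ``closed subspaces of sober spaces are sober'' and Lemma~\ref{lem:adjoin-bottom}.

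You instead stay in $L=P_\bot$ and take the square equality from Theorem~\ref{thm:L-dcpo-products}. You then run the joint-continuity-of-join argument directly, using the partial join on the Scott-closed set $E$ of bounded pairs. This is essentially the mechanism of Proposition~\ref{prop:irreducible-upper-bound-sober}, with $E$ and the partial join in place of $E_A$. Your route avoids the top, the meet-continuity check for $K$, and any appeal to Proposition~\ref{prop:lattice-input}(3). Countability and meet-continuity enter only through the product equality, so your argument actually shows that any dcpo in which bounded pairs have joins and the square equality holds is Scott sober. The paper's route is shorter, given the cited theorem.

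Two points in your write-up should be cleaned up. First, in step (iv), the sentence ``giving $c\in A$ above points of both'' is not meaningful, since $c\in U\cap V$ need not lie above $a$ or $b$. Nor do you need any further use of meet-continuity or Lemma~\ref{lem:cofinal-chain} at that point. The argument in your final paragraph is complete by itself: $E$ is Scott closed, hence closed in $\Sig L\times\Sig L$; some rectangle $U\times V$ about $(a,b)$ misses $E$; and any $c\in U\cap V\cap A$ gives $(c,c)\in(U\times V)\cap E$.

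Second, in step (v) what you need is $j(A\times A)\subseteq A$, not $A\subseteq\cl j(A\times A)$. Spell it out as follows. Since $E$ is Scott closed, its intrinsic Scott topology is the subspace topology from $\Sig(L\times L)=\Sig L\times\Sig L$, so $j$ is continuous on $E$ with that topology. Given a Scott-open $W\ni a\jc b$, continuity at $(a,b)$ yields Scott-open $U\ni a$ and $V\ni b$ with $j((U\times V)\cap E)\subseteq W$. Irreducibility gives $c\in U\cap V\cap A$, and then $j(c,c)=c\in W\cap A$. Hence $a\jc b\in\cl A=A$.
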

\begin{proof}
The product assertion follows from Corollary~\ref{cor:bottom-free-local-products}. For sobriety, let $B=P_\bot$. Then $B$ is a countable meet-continuous domain in which every upper-bounded subset, including the empty subset, has a supremum. Adjoin a new greatest element $\top$ to obtain $K=B\cup\{\top\}$. Every subset of $B$ bounded in $B$ has its old supremum, and every subset unbounded in $B$ has supremum $\top$. Thus $K$ is a complete lattice.

For $x,y\in B$, their meet exists in $B$: it is the supremum of their nonempty, upper-bounded set of common lower bounds. Hence $B$ satisfies the meet-continuity identity \eqref{eq:meet-mc}. These binary meets do not change on adjoining $\top$. A directed subset of $K$ not containing $\top$ has its supremum in $B$, whereas a directed subset containing $\top$ has a greatest member. These two cases, and $\top\mc x=x$, verify \eqref{eq:meet-mc} for $K$. Therefore $K$ is a countable meet-continuous complete lattice, so $\Sig K$ is sober by Proposition~\ref{prop:lattice-input}\textup{(3)}.

The subset $B$ is Scott closed in $K$, and its intrinsic Scott topology is the subspace topology. Closed subspaces of sober spaces are sober: an irreducible closed subset of a closed subspace is also irreducible and closed in the ambient space, and its generic point belongs to that subset. Thus $\Sig B$ is sober. Lemma~\ref{lem:adjoin-bottom} now gives sobriety of $\Sig P$.
\end{proof}

\section{Common upper bounds and sobriety}\label{sec:upper-bounds}
For $A\in\Gamma(P)$ and $x\in P$, put
\begin{equation}\label{eq:upper-bound-sets}
 \begin{aligned}
 D_{A,x}&=\down(\up x\cap A),\\
 E_A&=\bigcup_{a\in A}(\down a\times\down a).
 \end{aligned}
\end{equation}
Thus $y\in D_{A,x}$ if and only if $(x,y)\in E_A$, or equivalently $x$ and $y$ have a common upper bound in $A$. Both sets are lower, and $E_A$ is symmetric. The complementary upper set is
\[
 (P\times P)\setminus E_A
 =\{(x,y):\up x\cap\up y\subseteq P\setminus A\}.
\]
This is the common-upper-bound replacement for the inverse image of $P\setminus A$ under binary join. The set $\up x\cap\up y$ need not be nonempty or Scott compact, so no such assumption is made on the values of this correspondence.

Recall that a $T_0$ space $X$ is a \emph{strong $d$-space} if, for every nonempty directed set $D$ in its specialization order, $x\in X$ and open $U\subseteq X$,
\[
 \bigcap_{d\in D}(\up d\cap\up x)\subseteq U
 ~\Longrightarrow~
 \up d\cap\up x\subseteq U\text{ for some }d\in D.
\]
For Scott spaces, this condition is equivalent to $D_{A,x}\in\Gamma(P)$ for every $A\in\Gamma(P)$ and $x\in P$ \cite[Theorem~61]{LawsonXuLower}. Together with the square product equality it implies sobriety \cite[Theorem~5.11]{LawsonXu}, \cite[Theorem~76]{LawsonXuLower}. For this last implication, however, only irreducible closed sets need to be considered.

\begin{lemma}\label{lem:upper-bound-sections}
For a dcpo $P$ and $A\in\Gamma(P)$, the following two conditions are equivalent:
\begin{enumerate}[label=\textup{(\arabic*)}]
\item $D_{A,x}\in\Gamma(P)$ for every $x\in P$.
\item $E_A$ is Scott closed in $P\times P$.
\end{enumerate}
\end{lemma}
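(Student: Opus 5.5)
Both sets are lower sets, so in each direction only closure under directed suprema has to be checked. Throughout I use that directed suprema in $P\times P$ are computed coordinatewise, together with the identity $D_{A,x}=\{y:(x,y)\in E_A\}$, so that $D_{A,x}$ is the section of $E_A$ at $x$.

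\emph{(2)$\Rightarrow$(1).} The map $y\mapsto(x,y)$ fixes one coordinate, so it is Scott continuous (Section~2). The preimage of the Scott-closed set $E_A$ under this map is exactly $D_{A,x}$. Hence $D_{A,x}\in\Gamma(P)$.

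\emph{(1)$\Rightarrow$(2).} I take a directed set $\{(x_i,y_i):i\in I\}\subseteq E_A$ and put $x=\bigvee x_i$ and $y=\bigvee y_i$. I will show $(x,y)\in E_A$ in two steps, moving one coordinate at a time.

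First I fix $i$. For every $j\ge i$ (in the directed order) we have $(x_j,y_j)\in E_A$ and $x_i\le x_j$, so $y_j\in D_{A,x_j}\subseteq D_{A,x_i}$. The set $\{y_j:j\ge i\}$ is directed, cofinal in $\{y_j\}$, and has supremum $y$. By (1), $D_{A,x_i}$ is Scott closed, so $y\in D_{A,x_i}$. By symmetry of $E_A$, this gives $x_i\in D_{A,y}$ for every $i$.

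Next, (1) applied at the point $y$ says $D_{A,y}$ is Scott closed. Since every $x_i$ lies in it, so does $x=\bigvee x_i$. Thus $(y,x)\in E_A$, and by symmetry $(x,y)\in E_A$.

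The only delicate point is the second coordinate in the first step. Monotonicity $D_{A,x_j}\subseteq D_{A,x_i}$ for $x_i\le x_j$ holds because $\up x_j\subseteq\up x_i$. This is what allows the closedness of the single section $D_{A,x_i}$ to absorb the whole tail $\{y_j:j\ge i\}$. The rest of the argument is routine bookkeeping with coordinatewise suprema.
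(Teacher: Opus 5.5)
Your proof is correct and follows essentially the same route as the paper's. You close one coordinate inside the section at a fixed earlier point, using $D_{A,x_j}\subseteq D_{A,x_i}$ on a cofinal tail, transfer via symmetry of $E_A$, close the other coordinate in the section at the limit point, and prove the converse with the same map $y\mapsto(x,y)$; the paper simply treats the first coordinates before the second.
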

\begin{proof}
Suppose~(1), and let $D\subseteq E_A$ be directed with $\bigvee D=(p,q)$. Fix $(u,v)\in D$. The tail of $D$ above $(u,v)$ is cofinal in $D$, and all its first coordinates belong to $B_{A,v}$. Their supremum is $p$, so $p\in B_{A,v}$. By symmetry, $v\in B_{A,p}$. This holds for every second coordinate of $D$. Since $B_{A,p}$ is Scott closed, $q\in B_{A,p}$, proving $(p,q)\in E_A$. Lowerness of $E_A$ now proves~(2).

Conversely, for fixed $x$, the map $y\mapsto(x,y)$ preserves directed suprema and has inverse image $D_{A,x}$ of $E_A$, so~(2) implies~(1).
\end{proof}

\begin{proposition}\label{prop:irreducible-upper-bound-sober}
Let $P$ be a dcpo such that
\begin{equation}\label{eq:square-product-hypothesis}
 \Sc(P\times P)=\Sc(P)\times\Sc(P).
\end{equation}
Then $\Sig P$ is sober if and only if
\begin{equation}\label{eq:irreducible-upper-bound-condition}
 \down(\up x\cap A)\in\Gamma(P)
 ~\bigl(A\in\operatorname{Irr}_c(\Sig P),\ x\in P\bigr).
\end{equation}
\end{proposition}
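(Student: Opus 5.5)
The forward direction needs no product hypothesis. If $\Sig P$ is sober and $A\in\operatorname{Irr}_c(\Sig P)$, then $A=\down a$ for some $a\in P$. Hence $\up x\cap A=\up x\cap\down a$, and the set $D_{A,x}=\down(\up x\cap\down a)$ equals $\down a$ when $x\le a$ and is empty otherwise. In both cases it is Scott closed.

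For the converse, I would follow the common-upper-bound method of Lawson and Xu, restricted to irreducible sets. Fix $A\in\operatorname{Irr}_c(\Sig P)$. By \eqref{eq:irreducible-upper-bound-condition}, every section $D_{A,x}$ is Scott closed, so Lemma~\ref{lem:upper-bound-sections} (whose proof only uses $A\in\Gamma(P)$) shows that $E_A$ is Scott closed in $P\times P$. By \eqref{eq:square-product-hypothesis}, $E_A$ is then closed in the product topology $\Sc(P)\times\Sc(P)$.

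The key step is to show that $A\times A\subseteq E_A$, that is, any two points of $A$ have a common upper bound in $A$. Suppose instead that $(a,b)\in (A\times A)\setminus E_A$. Since $E_A$ is closed in the product topology, there are Scott open sets $U\ni a$ and $V\ni b$ with $(U\times V)\cap E_A=\emptyset$. Both $U$ and $V$ meet $A$, so irreducibility yields some $c\in U\cap V\cap A$. But $(c,c)\in\down c\times\down c\subseteq E_A$, which contradicts $(c,c)\in U\times V$. Hence $A$ is directed: it is nonempty, and any two of its elements have an upper bound in $A$.

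Finally, $A$ is a Scott-closed directed set, so $a=\bigvee A$ exists and lies in $A$. Therefore $A=\down a$, and uniqueness of $a$ follows from antisymmetry. This proves sobriety.

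The only delicate point is the passage from Scott closedness of $E_A$ in $P\times P$ to closedness in the product topology, which is exactly what the square-product hypothesis supplies. I would also check that Lemma~\ref{lem:upper-bound-sections} applies to an individual $A$: its statement is per $A$, so this is fine, although the proof writes $B_{A,\cdot}$ where it means $D_{A,\cdot}$.
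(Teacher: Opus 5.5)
Your proof is correct and is essentially the paper's argument. The forward direction goes via $D_{\down a,x}\in\{\down a,\emptyset\}$, and the converse goes via Lemma~\ref{lem:upper-bound-sections}, the square-product hypothesis and irreducibility to get $A\times A\subseteq E_A$ (the paper phrases your rectangle argument as $A\times A\subseteq\cl\Delta_A\subseteq E_A$), so that $A$ is directed and $A=\down\bigvee A$. You are also right that $B_{A,\cdot}$ in the lemma's proof should read $D_{A,\cdot}$.
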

\begin{proof}
If $\Sig P$ is sober, then $A=\down a$ for some $a\in P$, and
\[
 D_{\down a,x}=
 \begin{cases}
  \down a,&x\le a,\\
  \emptyset,&x\nleq a.
 \end{cases}
\]
Thus \eqref{eq:irreducible-upper-bound-condition} holds, even without \eqref{eq:square-product-hypothesis}.

Conversely, fix $A\in\operatorname{Irr}_c(\Sig P)$. By Lemma~\ref{lem:upper-bound-sections}, $E_A$ is Scott closed in $P\times P$. By \eqref{eq:square-product-hypothesis}, it is closed in the ordinary product topology. It contains the diagonal $\Delta_A=\{(a,a):a\in A\}$. For $x,y\in A$, every open rectangle about $(x,y)$ meets $\Delta_A$, because any two open sets meeting $A$ have an intersection meeting $A$. Hence
\[
 A\times A\subseteq\cl_{\sigma(P)\times\sigma(P)}\Delta_A\subseteq E_A.
\]
Consequently, $A$ is directed. Scott closedness gives $A=\down\bigvee A$, proving sobriety.
\end{proof}

\begin{corollary}\label{cor:sequential-upper-bound}
For a countable dcpo $P$ satisfying \eqref{eq:square-product-hypothesis}, $\Sig P$ is sober if and only if the following holds: whenever $A\in\operatorname{Irr}_c(\Sig P)$, $x\in A$, and $y_0\le y_1\le\cdots$ has supremum $y$, then
\begin{equation}\label{eq:sequential-upper-bound-lifting}
 \begin{aligned}
 &\bigl(\forall n\in\N\ \exists a_n\in A:\ x\le a_n,
                  \ y_n\le a_n\bigr)\\
 &\hspace{20mm}\Longrightarrow~
 \exists a\in A:\ x\le a,\ y\le a.
 \end{aligned}
\end{equation}
\end{corollary}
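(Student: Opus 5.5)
The plan is to reduce the corollary to Proposition~\ref{prop:irreducible-upper-bound-sober}. Under \eqref{eq:square-product-hypothesis}, sobriety is equivalent to condition \eqref{eq:irreducible-upper-bound-condition}, namely $D_{A,x}=\down(\up x\cap A)\in\Gamma(P)$ for every irreducible closed $A$ and every $x\in P$. So it suffices to show that, for countable $P$, this condition is equivalent to the sequential lifting \eqref{eq:sequential-upper-bound-lifting} over all $A\in\operatorname{Irr}_c(\Sig P)$ and $x\in A$.

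For the forward direction, assume \eqref{eq:irreducible-upper-bound-condition}, and let $A$, $x\in A$ and a chain $(y_n)$ with supremum $y$ satisfy the hypothesis of \eqref{eq:sequential-upper-bound-lifting}. Then each $y_n\in D_{A,x}$. This set is Scott closed and the chain is directed, so $y\in D_{A,x}$. That is, $y\le a$ for some $a\in A$ with $x\le a$, which is the conclusion. Alternatively, sobriety gives $A=\down a_0$ directly, and then $a_0$ itself works.

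For the converse, fix $A\in\operatorname{Irr}_c(\Sig P)$ and $x\in P$. The set $D_{A,x}$ is always a lower set. If $x\notin A$, then $\up x\cap A=\emptyset$, because $A$ is lower, so $D_{A,x}=\emptyset$ is closed. Now suppose $x\in A$ and let $D\subseteq D_{A,x}$ be directed with supremum $y$. Here countability enters: $D\subseteq P$ is countable. By Lemma~\ref{lem:cofinal-chain}, $D$ contains a cofinal chain $C$ with the same supremum. Enumerate $C$ as $y_0\le y_1\le\cdots$, repeating the last element if $C$ is finite or has a largest element. Each $y_n\in D_{A,x}$ yields some $a_n\in A$ above both $x$ and $y_n$. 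Then \eqref{eq:sequential-upper-bound-lifting} gives $a\in A$ above $x$ and $y$, so $y\in D_{A,x}$. Hence $D_{A,x}$ is closed under directed suprema and belongs to $\Gamma(P)$, and Proposition~\ref{prop:irreducible-upper-bound-sober} yields sobriety.

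The only delicate point is the passage from arbitrary directed sets to increasing sequences, and Lemma~\ref{lem:cofinal-chain} handles it once countability of $P$ is used. One should also note that the restriction to $x\in A$ in \eqref{eq:sequential-upper-bound-lifting} costs nothing, because the case $x\notin A$ is trivial. No other obstacle is expected.
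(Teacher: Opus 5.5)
Your proposal is correct and follows the paper's own proof. You reduce to Proposition~\ref{prop:irreducible-upper-bound-sober}, dispose of $x\notin A$ using that $A$ is lower, and use countability with Lemma~\ref{lem:cofinal-chain} to identify Scott closedness of $D_{A,x}$ with closure under suprema of nondecreasing sequences, which is exactly \eqref{eq:sequential-upper-bound-lifting}. One cosmetic point: a general countable chain need not admit a nondecreasing enumeration, so take $(y_n)$ to be the $\omega$-sequence built in the proof of Lemma~\ref{lem:cofinal-chain}, or the constant sequence at the largest element when $D$ has one.
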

\begin{proof}
For $x\notin A$, the set $D_{A,x}$ is empty because $A$ is lower. For $x\in A$, condition \eqref{eq:sequential-upper-bound-lifting} says exactly that the lower set $D_{A,x}$ is closed under suprema of nondecreasing sequences. By countability and Lemma~\ref{lem:cofinal-chain}, this is equivalent to its being Scott closed. Apply Proposition~\ref{prop:irreducible-upper-bound-sober}.
\end{proof}

The witnesses $a_n$ in \eqref{eq:sequential-upper-bound-lifting} need not form a directed set. In a lattice they can be replaced by the ascending sequence $x\jc y_n\le a_n$, whose supremum remains in $A$. Without binary joins, this replacement is unavailable. We have not established \eqref{eq:sequential-upper-bound-lifting} for all countable meet-continuous domains; it is a reduction of Question~\ref{quest:mc-dcpo-sober}, conditional on the square product equality, not an affirmative solution to it.

\begin{example}[{\cite[Example~58]{LawsonXuLower}}]\label{ex:Ldcpo-not-strong}
Countability and meet-continuity do not imply the strong $d$-space condition. Let
\[
 P=\{a_n:n\ge1\}\cup\{b,\omega_0\}\cup\{\omega_n:n\ge1\},
\]
with the order (see Figure 1) generated by
\[
 a_1<a_2<\cdots<\omega_0,
 ~ b<\omega_n,
 ~ a_m<\omega_n~(m\le n).
\]

\vspace{0.1cm}

\begin{center}
\begin{tikzpicture}[x=0.7cm,y=0.7cm,
  dot/.style={circle,fill=black,inner sep=1.8pt},
  every node/.style={inner sep=1pt}]
  \node[dot,label=left:$a_1$] (a1) at (0,0) {};
  \node[dot,label=left:$a_2$] (a2) at (0,1.2) {};
  \node[dot,label=left:$a_3$] (a3) at (0,2.5) {};
  \node[dot] (c1) at (0,3.25) {};
  \node[dot] (c2) at (0,3.75) {};
  \node[dot] (c3) at (0,4.25) {};
  \node[dot,label=left:$a_n$] (an) at (0,4.9) {};
  \node[dot] (c4) at (0,5.45) {};
  \node[dot] (c5) at (0,5.95) {};
  \node[dot] (c6) at (0,6.45) {};
  \node[dot,label=left:$\omega_0$] (w0) at (0,7.05) {};

  \node[dot,label=above:$\omega_1$] (w1) at (2.1,7.05) {};
  \node[dot,label=above:$\omega_2$] (w2) at (4.2,7.05) {};
  \node[dot,label=above:$\omega_3$] (w3) at (6.3,7.05) {};
  \node[dot] (td1) at (7.2,7.05) {};
  \node[dot] (td2) at (7.8,7.05) {};
  \node[dot] (td3) at (8.4,7.05) {};
  \node[dot,label=above:$\omega_n$] (wn) at (10.0,7.05) {};
  \node[dot] (td4) at (11.0,7.05) {};
  \node[dot] (td5) at (11.6,7.05) {};
  \node[dot] (td6) at (12.2,7.05) {};
  \node[dot,label=below:$b$] (b) at (6.1,2.2) {};

  \draw (a1)--(a2)--(a3)--(c1)--(c2)--(c3)--(an)--(c4)--(c5)--(c6)--(w0);

  \draw (b)--(w1);
  \draw (b)--(w2);
  \draw (b)--(w3);
  \draw (b)--(wn);

  \draw (a1)--(w1);
  \draw (a2)--(w2);
  \draw (a3)--(w3);
  \draw (an)--(wn);
\end{tikzpicture}

\smallskip
\small Figure 1. The poset $P$ in Example~\ref{ex:Ldcpo-not-strong}.
\end{center}
A directed subset containing a maximal point has that point as its largest element. A directed subset without a largest element cannot contain $b$ together with any $a_n$, because their common upper bounds are maximal. It must therefore be an infinite subset of the ascending chain $\{a_n:n\ge1\}$, and its supremum is $\omega_0$. This also shows that all points other than $\omega_0$ are compact. Since $\omega_0=\bigvee_n a_n$, the dcpo $P$ is countable and algebraic, hence meet-continuous. Nevertheless,
\[
 \up\omega_0\cap\up b=\emptyset,
 ~
 \up a_n\cap\up b=\{\omega_m:m\ge n\}\ne\emptyset.
\]
The lower set $B_{P,b}$ contains every $a_n$ but not their supremum $\omega_0$, so it is not Scott closed.

The same obstruction persists after adjoining a new bottom. Moreover, $(P)_\bot$ is an $L$-dcpo: each principal ideal is either a complete chain or a finite lattice. Hence even a pointed countable algebraic $L$-dcpo need not have a strong $d$-space as its Scott space. These examples are sober, as they are algebraic, and are not counterexamples to either question above. Their role is to exclude the stronger intermediate assertion for all Scott-closed sets.
\end{example}

The following consequences combine the proved local product result with the established sobriety criteria of Lawson and Xu \cite{LawsonXu,LawsonXuLower}; they do not require an affirmative answer to Question~\ref{quest:mc-dcpo-product} in general. We use property $R$, well-filteredness, and coherence as in \cite[Sections~2 and~3]{LawsonXuLower}. Here $\lambda(P)$ is the Lawson topology, the join of the Scott topology and the lower topology generated by the sets $P\setminus\up x$. The space $(P,\lambda(P))$ is \emph{upper-semicompact} if every principal upper set is Lawson compact.

\Needspace{10\baselineskip}
\begin{corollary}\label{cor:conditional-sobriety}
Let $P$ be a countable meet-continuous domain such that every nonempty subset of every principal ideal has a supremum in that ideal. In particular, $P$ may be an $L$-dcpo. If any one of the following conditions holds, then $\Sig P$ is sober:
\begin{enumerate}[label=\textup{(\arabic*)}]
\item $\Sig P$ is a strong $d$-space;
\item $\Sig P$ has property $R$;
\item $\Sig P$ is well-filtered and coherent;
\item $(P,\lambda(P))$ is upper-semicompact.
\end{enumerate}
\end{corollary}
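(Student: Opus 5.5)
The proof first establishes the square product equality \eqref{eq:square-product-hypothesis} for $P$. It then feeds each of the four hypotheses into the corresponding sobriety criterion of Lawson and Xu, each of which assumes exactly that equality.

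For the first step, apply Corollary~\ref{cor:bottom-free-local-products} with $n=2$ and $P_1=P_2=P$. The hypothesis on principal ideals is precisely the one required there. Countability and meet-continuity are given, so
\[
 \Sc(P\times P)=\Sc(P)\times\Sc(P).
\]
When $P$ is an $L$-dcpo, Theorem~\ref{thm:L-dcpo-products} gives this directly. No least element is needed, since the corollary works through the lifting $P_\bot$ of Lemma~\ref{lem:adjoin-bottom} and then restricts to the Scott-open subset $P\times P$.

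For case~(1), the route is the one already recorded before Lemma~\ref{lem:upper-bound-sections}. By \cite[Theorem~61]{LawsonXuLower}, the strong $d$-space property gives $D_{A,x}\in\Gamma(P)$ for all $A\in\Gamma(P)$ and $x\in P$. In particular this holds for irreducible $A$, so condition \eqref{eq:irreducible-upper-bound-condition} is satisfied. Proposition~\ref{prop:irreducible-upper-bound-sober}, together with the square equality, then yields sobriety. Alternatively, one may cite \cite[Theorem~5.11]{LawsonXu} or \cite[Theorem~76]{LawsonXuLower} directly.

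For cases~(2)--(4), I would invoke the corresponding theorems of \cite{LawsonXuLower}. Each states that a dcpo whose Scott space satisfies the square product equality and has, respectively, property $R$, is well-filtered and coherent, or has Lawson upper-semicompactness, is sober. If a given criterion is stated only with the strong $d$-space conclusion as an intermediate step, I would route it through case~(1): first derive the strong $d$-space property, or at least \eqref{eq:irreducible-upper-bound-condition}, from the hypothesis under the square equality, and then conclude as above.

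The main obstacle is bookkeeping rather than mathematics. One must check that each cited Lawson--Xu criterion genuinely needs only \eqref{eq:square-product-hypothesis} and no further assumption. In particular, it must not require a least element, and it must not require the stronger product equality with arbitrary dcpos, which we do not have. If one of these criteria does need such an additional assumption, I would pass to $P_\bot$. That space is a countable meet-continuous $L$-dcpo, by the proof of Corollary~\ref{cor:bottom-free-local-products}, so it has its own square product equality by Theorem~\ref{thm:L-dcpo-products}. I would then verify that the relevant property, whether (1), (2), (3) or (4), transfers from $\Sig P$ to $\Sig P_\bot$ via \eqref{eq:bottom-scott}, and finally descend using the sobriety clause of Lemma~\ref{lem:adjoin-bottom}.
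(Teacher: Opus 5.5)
Your proposal is correct and follows the paper's proof. The square equality comes from Corollary~\ref{cor:bottom-free-local-products}, case~(1) goes through \cite[Theorem~61]{LawsonXuLower} and Proposition~\ref{prop:irreducible-upper-bound-sober}, and cases~(2)--(4) are reduced to~(1), or to \cite[Theorem~76 and Corollary~77]{LawsonXuLower}, with the paper citing \cite[Lemma~50\textup{(3)}, Proposition~33\textup{(2)}, and Proposition~42]{LawsonXuLower} for the implications (2)--(4)$\Rightarrow$(1), so your $P_\bot$ fallback is never needed.
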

\begin{proof}
Corollary~\ref{cor:bottom-free-local-products} gives the square-product equality~\eqref{eq:square-product-hypothesis}. Under~(1), all the sets $D_{A,x}$ are Scott closed by \cite[Theorem~61]{LawsonXuLower}, so Proposition~\ref{prop:irreducible-upper-bound-sober} applies. Conditions~(2)--(4) each imply~(1), by \cite[Lemma~50\textup{(3)}, Proposition~33\textup{(2)}, and Proposition~42]{LawsonXuLower}; equivalently one may apply \cite[Theorem~76 and Corollary~77]{LawsonXuLower} directly.
\end{proof}

\begin{corollary}\label{cor:finite-upper-bound-generators}
Let $P$ satisfy the local hypotheses of Corollary~\ref{cor:conditional-sobriety}. Suppose that for all $x,y\in P$ there is a finite set $F_{x,y}\subseteq P$, possibly empty, such that
\[
 \up x\cap\up y=\up F_{x,y}.
\]
Then $\Sig P$ is sober.
\end{corollary}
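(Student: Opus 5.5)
Under the local hypotheses of Corollary~\ref{cor:conditional-sobriety}, Corollary~\ref{cor:bottom-free-local-products} already gives the square-product equality~\eqref{eq:square-product-hypothesis}. The plan is therefore to verify condition~\eqref{eq:irreducible-upper-bound-condition} of Proposition~\ref{prop:irreducible-upper-bound-sober}. In fact we verify the stronger statement that $D_{A,x}=\down(\up x\cap A)$ is Scott closed for \emph{every} $A\in\Gamma(P)$ and $x\in P$. This is the strong $d$-space condition, so one could equally conclude via Corollary~\ref{cor:conditional-sobriety}(1), but the direct check is short. Since $P$ is countable, Lemma~\ref{lem:cofinal-chain} reduces the check to suprema of ascending sequences. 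So let $y_0\le y_1\le\cdots$ with $y=\bigvee_n y_n$, and suppose that for each $n$ there is $a_n\in A$ with $x\le a_n$ and $y_n\le a_n$. We must find $a\in A$ above both $x$ and $y$.

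The key step uses the finite generators. For each $n$, the point $a_n$ lies in $\up x\cap\up y_n=\up F_{x,y_n}$, so some $f\in F_{x,y_n}$ satisfies $f\le a_n$. Because $A$ is a lower set, $f\in A$. Thus the set $G_n=F_{x,y_n}\cap A$ is nonempty, and every element of $G_n$ is an upper bound of $x$ and $y_n$ lying in $A$. For $m\le n$ we have $\up F_{x,y_n}\subseteq\up F_{x,y_m}$, so each $g\in G_n$ lies above some element of $F_{x,y_m}$, and that element lies in $A$ because $A$ is lower. This lets us build a finitely branching tree (K\"onig's lemma). Its level-$n$ nodes are the elements of $G_n$, and the parent of a node at level $n$ is some element of $G_{n-1}$ below it. Every level is nonempty and finite, so the tree has an infinite branch $g_0\le g_1\le\cdots$ with $g_n\in G_n$.

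The branch is an ascending chain in $A$, so its supremum $a=\bigvee_n g_n$ exists and lies in $A$ because $A$ is Scott closed. Moreover $x\le g_0\le a$, and $y_n\le g_n\le a$ for every $n$, hence $y\le a$. This gives~\eqref{eq:sequential-upper-bound-lifting} for every Scott-closed $A$ and every $x\in A$. When $x\notin A$, the set $D_{A,x}$ is empty. Therefore every $D_{A,x}$ is Scott closed, and Proposition~\ref{prop:irreducible-upper-bound-sober}, or Corollary~\ref{cor:sequential-upper-bound}, yields sobriety of $\Sig P$.

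The main obstacle is the K\"onig-type selection in the second paragraph. The witnesses $a_n$ themselves need not be comparable, and the step that repairs this is the pruning to the finite sets $G_n\subseteq A$, where lowerness of $A$ is essential. We must also confirm that the case $F_{x,y}=\emptyset$ is harmless. It is: if $F_{x,y_n}$ were empty, then $\up x\cap\up y_n$ would be empty and no witness $a_n$ could exist, so the hypothesis rules this case out.
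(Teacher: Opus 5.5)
Your proof is correct, but it takes a different route from the paper's. The paper argues in one line by citation. Finite generation of $\up x\cap\up y$ yields property $R$ by \cite[Proposition~33(1)]{LawsonXuLower}, and then Corollary~\ref{cor:conditional-sobriety}(2) applies. That corollary in turn goes through further results of Lawson and Xu (property $R$ gives the strong $d$-space property, equivalently Scott closedness of every $D_{A,x}$) before invoking Proposition~\ref{prop:irreducible-upper-bound-sober}.

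You instead prove Scott closedness of every $D_{A,x}$ directly, in three steps:
\begin{enumerate}
\item countability and Lemma~\ref{lem:cofinal-chain} reduce the check to ascending sequences;
\item lowerness of $A$ makes the finite sets $G_n=F_{x,y_n}\cap A$ nonempty and links each level downward to the previous one;
\item K\"onig's lemma extracts a chain $g_0\le g_1\le\cdots$ in $A$, whose supremum is the required common upper bound.
\end{enumerate}
In the tree, take nodes to be pairs $(n,g)$ so that elements repeated across levels cause no confusion.

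Your version is self-contained and avoids the external property-$R$ and strong-$d$-space machinery. It also shows concretely how finite generation replaces the ascending sequence $x\jc y_n$, which the remark after Corollary~\ref{cor:sequential-upper-bound} notes is unavailable without binary joins. The paper's route is shorter on the page, and its property-$R$ step, stated for $d$-spaces, is not tied to countability. Your K\"onig step does use countability, through the reduction to $\N$-indexed chains. That costs nothing here, since countability is needed anyway for the square-product equality. In both proofs, meet-continuity and the local completeness hypothesis enter only through Corollary~\ref{cor:bottom-free-local-products}.
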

\begin{proof}
The Scott space of a dcpo is a $d$-space. The finite-generation assumption implies property $R$ by \cite[Proposition~33\textup{(1)}]{LawsonXuLower}. Apply Corollary~\ref{cor:conditional-sobriety}\textup{(2)}.
\end{proof}

\section{Extraction in meet-semilattice dcpos}\label{sec:semilattice}
The extraction part of \cite{XuJi} extends beyond complete lattices when binary meets remain available. This section makes the dependence on these operations explicit. For $s<t$ in a poset, write
\[
 [s,t)=\{x:s\le x<t\}.
\]
Such an interval is not assumed to be directed.

\begin{lemma}\label{lem:semilattice-isolation}
Let $P$ be a countable meet-continuous domain with binary meets, and let $(p_n)$ be an acsending sequence with supremum $r$. For every nonempty $S\subseteq\down r$, there exist $t\in S$ and $m\in\N$ such that
\begin{equation}\label{eq:semilattice-isolate}
 \{y\in S:y\mc p_m=t\mc p_m\}=\{t\}.
\end{equation}
\end{lemma}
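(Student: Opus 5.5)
The plan is to view the maps $y\mapsto y\mc p_n$ as finer and finer ``coordinates'' on $\down r$. If no point of $S$ is isolated by any single coordinate, I will build a perfect binary tree of coordinate classes inside $S$. Each branch of that tree will produce a distinct point of $\down r$, giving $2^{\aleph_0}$ points and contradicting countability. The limit points of the branches need not lie in $S$; the contradiction comes only from $P$ being countable.

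\emph{Step 1 (coordinates separate points and refine).} For $n\in\N$ and $y,y'\in\down r$, write $y\sim_n y'$ when $y\mc p_n=y'\mc p_n$.
\begin{itemize}
\item[(a)] If $y\sim_{n+1}y'$, then meeting both sides with $p_n\le p_{n+1}$ gives $y\sim_n y'$. So the relations $\sim_n$ become finer as $n$ increases.
\item[(b)] For $y\le r$, Lemma~\ref{lem:meet-mc} applied to the directed set $\{p_n\}$ gives $y=y\mc r=\bigvee_n(y\mc p_n)$. Hence $y\sim_n y'$ for all $n$ forces $y=y'$.
\end{itemize}
Condition \eqref{eq:semilattice-isolate} says exactly that the $\sim_m$-class of $t$ within $S$ is $\{t\}$.

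\emph{Step 2 (perfect tree under the negation).} Suppose, for contradiction, that every $\sim_m$-class of every $t\in S$, taken within $S$, has at least two elements. Then any such class $C$ at level $m$ contains distinct $y\ne y'$. By (b) they are separated at some level $m'>m$, and by (a) their $\sim_{m'}$-classes lie inside $C$. By the assumption, each of these two disjoint classes again has at least two points.

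Iterating, I obtain the following data for finite binary words $w$:
\begin{itemize}
\item levels $m_0<m_1<\cdots$, chosen uniformly by length by taking the maximum of the needed levels at each stage and using (a);
\item nonempty sets $C_w\subseteq S$, each a $\sim_{m_{|w|}}$-class within $S$, with $C_{w0},C_{w1}\subseteq C_w$ disjoint.
\end{itemize}

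\emph{Step 3 (branches give distinct points).} For $f\in 2^{\N}$, put $z^f_k=y\mc p_{m_k}$ for any $y\in C_{f|k}$; this is well defined by the choice of classes. By (a), $z^f_k=z^f_{k+1}\mc p_{m_k}\le z^f_{k+1}$, so $(z^f_k)_k$ is ascending. Let $z^f=\bigvee_k z^f_k\le r$.

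Meet-continuity \eqref{eq:meet-mc} then gives
\[
 z^f\mc p_{m_k}=\bigvee_j\bigl(z^f_j\mc p_{m_k}\bigr)=z^f_k,
\]
since $z^f_j\mc p_{m_k}=z^f_k$ for $j\ge k$. If $f\ne g$ first differ at position $k$, then $C_{f|k+1}$ and $C_{g|k+1}$ are disjoint $\sim_{m_{k+1}}$-classes. Hence $z^f_{k+1}\ne z^g_{k+1}$, and therefore $z^f\ne z^g$. This gives an injection $2^{\N}\to\down r\subseteq P$, contradicting countability. So some $t\in S$ and some $m$ satisfy \eqref{eq:semilattice-isolate}.

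The main obstacle I expect is the bookkeeping in Step 2, in particular keeping the levels uniform across all branches so that the $\sim_{m_k}$-classes stay nested. Taking maxima of levels and invoking (a) should handle this. The essential conceptual point, used in Step 3, is that meet-continuity lets the branch limits $z^f$ recover their own coordinates.
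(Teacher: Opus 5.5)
\textbf{Assessment.} Your proof is correct, and it uses the same ingredients as the paper. These are the refinement property (a), the separation property (b) coming from meet-continuity, nested fibres of the maps $y\mapsto y\mc p_m$, and a second use of \eqref{eq:meet-mc} to show that the supremum of a compatible ascending sequence of coordinates recovers those coordinates. In that last computation, the terms with $j<k$ equal $z^f_j\le z^f_k$; that is why the supremum is $z^f_k$, and it is worth saying explicitly.

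\textbf{How you differ from the paper.} The difference is in how countability produces the contradiction.

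\emph{Paper.} It diagonalizes. It enumerates $\down r=\{e_n:n\in\N\}$ and builds a single nested sequence of fibres $F_n$. At stage $n$ it picks $y_n\in F_n$ with $y_n\ne e_n$, and a level $m_{n+1}$ at which $y_n$ already disagrees with $e_n$. The limit $b=\bigvee_n b_n$ then differs from every $e_n$.

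\emph{Yours.} You build a full binary Cantor scheme and get an injection $2^{\N}\to\down r$.

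\textbf{What each route buys.} The paper's version is leaner: it makes one choice per stage and needs no bookkeeping to keep levels uniform across the $2^k$ nodes of a given depth (your max-of-levels step, justified by (a)). Your version proves a slightly stronger dichotomy: if isolation fails for some $S$, then $\down r$ has cardinality at least $2^{\aleph_0}$. So the lemma holds whenever $|\down r|<2^{\aleph_0}$, not only when $P$ is countable.
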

\begin{proof}
We use the meet-coordinate argument of \cite[Lemma~3.1]{XuJi}. By Lemma~\ref{lem:meet-mc},
\[
 x=\bigvee_n(x\mc p_n)~(x\le r).
\]
Distinct elements of $\down r$ therefore differ at some coordinate, and a difference at one coordinate persists at all later coordinates: equality after meeting with $p_m$ implies equality after meeting with any $p_k\le p_m$.

Suppose \eqref{eq:semilattice-isolate} fails and enumerate all of $\down r$ as $\{e_n:n\in\N\}$, allowing repetitions. Choose $w\in S$, put $m_0=0$ and $b_0=w\mc p_0$. Recursively maintain a nonempty fibre
\[
 F_n=\{y\in S:y\mc p_{m_n}=b_n\}.
\]
It contains at least two points, so choose $y_n\in F_n\setminus\{e_n\}$ and $m_{n+1}>m_n$ with
\[
 b_{n+1}:=y_n\mc p_{m_{n+1}}\ne e_n\mc p_{m_{n+1}}.
\]
Then $F_{n+1}$ contains $y_n$, and
\[
 b_{n+1}\mc p_{m_n}=b_n,~ b_n\le b_{n+1}.
\]
Consequently $b=\bigvee_n b_n$ exists and belongs to $\down r$. For every fixed $n$, repeated compatibility gives $b_k\mc p_{m_n}=b_n$ for $k\ge n$, while the earlier terms are at most $b_n$. Equation~\eqref{eq:meet-mc} implies
\[
 b\mc p_{m_n}=\bigvee_k(b_k\mc p_{m_n})=b_n.
\]
Thus $b$ differs from $e_n$ at coordinate $m_{n+1}$ for every $n$, contradicting the enumeration of $\down r$. No binary join or nondirected supremum has been used.
\end{proof}

\begin{proposition}\label{prop:semilattice-puncture}
Let $P$ be a countable meet-continuous domain with binary meets. For each nonprincipal ideal $J$ of $P$, there exist $s<t\le\bigvee J$ such that $[s,t)$ is directed and
\begin{equation}\label{eq:semilattice-puncture}
 \down[s,t)\subseteq J,~ \bigvee[s,t)=t\notin J.
\end{equation}
\end{proposition}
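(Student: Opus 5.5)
The plan is to apply Lemma~\ref{lem:semilattice-isolation} to the set of points below $\bigvee J$ that lie outside $J$. The isolated point $t$ and its meet $s=t\mc p_m$ with a suitable term of a cofinal chain in $J$ will then give the required interval.

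First I set up the chain and the set $S$. Since $P$ is countable, $J$ is a countable directed set. It has no largest element, because $J$ is nonprincipal. Lemma~\ref{lem:cofinal-chain} therefore supplies a strictly ascending cofinal chain $p_0<p_1<\cdots$ in $J$ with $\down\{p_n:n\in\N\}=J$. Put $r=\bigvee J=\bigvee_n p_n$. Then $r\notin J$, since otherwise $J=\down r$. So
\[
 S=\down r\setminus J
\]
is nonempty. Lemma~\ref{lem:semilattice-isolation} gives $t\in S$ and $m\in\N$ such that $t$ is the only $y\in S$ with $y\mc p_m=t\mc p_m$. Set $s=t\mc p_m$. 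Then $s\le p_m$, so $s\in J$. Also $t\notin J$, so $s<t\le r$, and $t\notin J$ is already part of \eqref{eq:semilattice-puncture}.

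Next I show $[s,t)\subseteq J$. Let $s\le x<t$. From $x\le t$ we get $x\mc p_m\le t\mc p_m=s$. From $s\le x$ and $s\le p_m$ we get $s\le x\mc p_m$. Hence $x\mc p_m=s$. If $x\notin J$, then $x\in S$, because $x\le t\le r$. The isolation property would then force $x=t$, which is impossible. So $x\in J$. Since $J$ is lower, this gives $\down[s,t)\subseteq J$.

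Then I check directedness. The set $[s,t)$ is nonempty because it contains $s$. Given $x,y\in[s,t)$, choose $z\in J$ above both and put $w=z\mc t$. This element satisfies $x,y\le w\le t$, so $w\ge s$. Moreover $w\le z$, so $w\in J$, and therefore $w\ne t$. Thus $w\in[s,t)$ is the required common upper bound.

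Finally I compute the supremum. By Lemma~\ref{lem:meet-mc} and $t\le r$,
\[
 t=t\mc\bigvee_n p_n=\bigvee_n(t\mc p_n).
\]
For $n\ge m$ we have $s\le t\mc p_n\le p_n$. So $t\mc p_n$ lies in $J$, is different from $t$, and belongs to $[s,t)$. Hence $\bigvee[s,t)\ge t$, and the reverse inequality is trivial. This proves \eqref{eq:semilattice-puncture}.

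The only real work is the choice of $S$ and $s$, so that the isolation lemma turns every point of $[s,t)$ into a point of $J$. The remaining steps use only binary meets and the identity \eqref{eq:meet-mc}.
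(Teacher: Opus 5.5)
Your proof is correct and follows the paper's argument essentially step for step. It uses the same strictly ascending cofinal chain, the same choices $S=\down r\setminus J$ and $s=t\mc p_m$, the same common upper bound $t\mc z$ with $z\in J$ for directedness, and the same tail $t\mc p_n$ ($n\ge m$) for the supremum, while spelling out a few details the paper leaves implicit, such as why $x\mc p_m=s$ whenever $s\le x\le t$.
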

\begin{proof}
The coordinate argument is that of \cite[Lemma~3.2]{XuJi}; the directedness argument below replaces the use of binary joins. Set $r=\bigvee J$. By Lemma~\ref{lem:cofinal-chain}, choose a strictly ascending cofinal sequence $(p_n)$ in $J$. Since $J$ is nonprincipal, $r\notin J$. Apply Lemma~\ref{lem:semilattice-isolation} to $S=\down r\setminus J$, and let $t,m$ be as there. Put $s=t\mc p_m$. Then $s\in J$ and $s<t$.

If $s\le x\le t$, then $x\mc p_m=s$. Equation~\eqref{eq:semilattice-isolate} forces every such $x<t$ to belong to $J$. Thus $[s,t)\subseteq J$.

For $x,y\in[s,t)$ choose $j\in J$ with $x,y\le j$. Then
\[
 x,y\le t\mc j\in J,~ s\le t\mc j<t,
\]
since $t\notin J$. This supplies a common upper bound in $[s,t)$ and proves directedness without a binary join. Finally, for $n\ge m$, the elements $t\mc p_n$ lie in $[s,t)$ and have supremum $t$ by \eqref{eq:meet-mc}. Lowerness of $J$ completes \eqref{eq:semilattice-puncture}.
\end{proof}

The endpoint $t$ need not equal $\bigvee J$. The two extraction statements above do not yet produce a countable complete family for an arbitrary meet-semilattice dcpo. In the complete-lattice proof, the later maximal-ideal extension and join translation require binary joins; neither operation has been replaced here. In an $L$-dcpo these operations can instead be performed within a principal ideal, as in Section~\ref{sec:local-tests}.

\section{Arbitrary products and least elements}\label{sec:arbitrary-products}
The finite-product theorem extends to arbitrary products precisely when only finitely many factors lack least elements. We first separate the structural obstruction from the finite-support argument. All factors in this section are nonempty, and arbitrary products are taken in the usual set-theoretic setting with choice.

\begin{lemma}\label{lem:minimal-component-decomposition}
Let $P$ be an $L$-dcpo, and let $\operatorname{Min}(P)$ denote its set of minimal elements. For $x\in P$, put $b(x)=\min(\down x)$. Then:
\begin{enumerate}[label=\textup{(\arabic*)}]
\item $b(x)$ is the unique minimal element of $P$ below $x$, and $x\le y$ implies $b(x)=b(y)$;
\item the sets $P_m=\up m$, $m\in\operatorname{Min}(P)$, form a partition of $P$ into Scott-open and Scott-closed sub-dcpos, with no comparable points in distinct parts;
\item $P$ has a least element if and only if $\operatorname{Min}(P)$ is a singleton.
\end{enumerate}
No countability or meet-continuity assumption is needed.
\end{lemma}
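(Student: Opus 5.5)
The whole lemma will follow from the single observation that, in an $L$-dcpo, every principal ideal $\down x$ is a complete lattice and hence has a least element (the supremum of the empty subset). I will write this least element as $b(x)$. This is exactly where the $L$-dcpo hypothesis enters; after that, everything is elementary order theory. No directed completeness beyond the definition of the Scott topology is needed, and no countability or meet-continuity.

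For (1), I first check that $b(x)$ is minimal in $P$. If $z\le b(x)$, then $z\in\down x$, so $b(x)\le z$ because $b(x)$ is least in $\down x$; hence $z=b(x)$. For uniqueness, any minimal $m\le x$ satisfies $b(x)\le m$, and minimality of $m$ forces $m=b(x)$. For monotonicity, suppose $x\le y$. Then $b(x)\le x\le y$ gives $b(x)\in\down y$, so $b(y)\le b(x)$. Minimality of $b(x)$ in $P$ then yields $b(y)=b(x)$.

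For (2), I note that (1) identifies $P_m=\up m$ with the fibre $\{x:b(x)=m\}$. Indeed, $m\le x$ with $m$ minimal forces $m=b(x)$ by uniqueness. Hence the sets $P_m$ are pairwise disjoint. They cover $P$, since every $x$ lies in $P_{b(x)}$; in particular $\operatorname{Min}(P)\neq\emptyset$ because $P$ is nonempty.
\begin{itemize}
\item If $x\le y$ are comparable, then $b(x)=b(y)$ by (1), so $x$ and $y$ lie in the same part.
\item Each $P_m$ is an upper set. It is also a lower set: if $x\le y\in P_m$, then $b(x)=b(y)=m$.
\item Each $P_m$ is closed under directed suprema, since $\bigvee D\ge d\ge m$ for any $d\in D$. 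Thus $P_m$ is Scott closed, and it is a sub-dcpo with suprema computed in $P$.
\item Each $P_m$ is Scott open: if $D$ is directed with $\bigvee D\in P_m$, then any $d\in D$ satisfies $d\le\bigvee D$, so $b(d)=m$ and $D\cap P_m\neq\emptyset$. (Alternatively, its complement is the union of the other parts, each of which is Scott closed.)
\end{itemize}

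For (3), a least element $\bot$ is clearly the only minimal element. Conversely, if $\operatorname{Min}(P)=\{m\}$, then every $x$ satisfies $x\ge b(x)=m$, so $m$ is least.

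The only step needing care is the monotonicity claim in (1). It relies on minimality of $b(x)$ in all of $P$, not just within $\down x$, and that is why I establish minimality first. I expect no genuine obstacle beyond this.
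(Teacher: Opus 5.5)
Your proof is correct and follows essentially the same route as the paper. The key steps coincide: $b(x)$ is taken as the least element of the complete lattice $\down x$, its minimality and uniqueness are checked directly, and the parts $P_m$ are shown to be upper, lower, closed under directed suprema and Scott open using the invariance of $b$ along $d\le\bigvee D$. The only cosmetic difference is in the monotonicity step, where the paper derives $b(x)=b(y)$ from the two inequalities $b(y)\le x$ and $b(x)\le y$ instead of from minimality of $b(x)$ in $P$, so that step does not actually depend on the prior minimality argument.
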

\begin{proof}
The element $b(x)$ exists because $\down x$ is a complete lattice. If $z\le b(x)$, then $z\in\down x$, so $b(x)\le z$ and $z=b(x)$. Thus $b(x)$ is minimal in $P$. If $m$ is any minimal element below $x$, then $b(x)\le m$ and hence $b(x)=m$.

If $x\le y$, then $b(y)\le x$, so $b(x)\le b(y)$. Also $b(x)\le y$, so $b(y)\le b(x)$. This proves~(1). Every point lies in exactly one $P_m$, and~(1) excludes comparabilities between different parts. Each $P_m$ is upper by definition. It is also lower: if $z\le x\in P_m$, then $b(z)=b(x)=m$ and hence $z\in P_m$.

For a nonempty directed $D\subseteq P$, write $s=\bigvee D$. Every $d\in D$ satisfies $d\le s$, so $b(d)=b(s)$. Consequently, if $s\in P_m$, then $D\subseteq P_m$; this proves Scott openness. If instead $D\subseteq P_m$, choose $d\in D$ and observe that $m\le d\le s$, whence $s\in P_m$. Together with lowerness this proves Scott closedness and the sub-dcpo assertion in~(2). Finally,~(1) shows that a unique minimal element lies below every point; the converse is immediate. This proves~(3).
\end{proof}

The next proposition extends the finite-support argument of \cite[Section~5]{XuJi} by allowing finitely many factors without least elements.

\Needspace{12\baselineskip}
\begin{proposition}\label{prop:finite-support-transfer}
Let $(P_i)_{i\in I}$ be a family of nonempty dcpos such that
\[
 \Sc\!\left(\prod_{i\in F}P_i\right)
 =\prod_{i\in F}\Sc(P_i)
 ~(F\subseteq I\text{ finite}).
\]
If $J=\{i\in I:P_i\text{ has no least element}\}$ is finite, then
\[
 \Sc\!\left(\prod_{i\in I}P_i\right)
 =\prod_{i\in I}\Sc(P_i).
\]
\end{proposition}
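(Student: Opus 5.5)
The inclusion $\prod_{i\in I}\Sc(P_i)\subseteq\Sc(\prod_{i\in I}P_i)$ holds for all dcpos, because directed suprema in the order product are computed coordinatewise and the projections are Scott continuous. The work is the reverse inclusion. Fix a Scott-open $U\subseteq P=\prod_{i\in I}P_i$ and a point $x\in U$. I will find a finite $F\supseteq J$ and a set $V$ open in $\prod_{i\in F}\Sc(P_i)$ such that $x\in V\times\prod_{i\notin F}P_i\subseteq U$.

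\textbf{Step 1: reduce to finitely many coordinates below $x$.} For $i\notin J$ let $\bot_i$ be the least element of $P_i$. For finite $G\subseteq I\setminus J$, define $x^{G}\in P$ by
\[
 x^{G}_i=x_i \text{ for } i\in J\cup G,\qquad x^{G}_i=\bot_i \text{ otherwise.}
\]
The family $\{x^{G}\}$ is directed, increasing in $G$, and its supremum is $x\in U$. Since $U$ is Scott open, some $x^{G}\in U$. Put $F=J\cup G$; this is finite because $J$ is finite. This is exactly the point where finiteness of $J$ enters: the coordinates outside $F$ can be pushed down to least elements only because those least elements exist.

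\textbf{Step 2: pass to the finite product.} Consider the map $e\colon\prod_{i\in F}P_i\to P$ that fills each coordinate $i\notin F$ with $\bot_i$. It preserves directed suprema, so $W=e^{-1}(U)$ is Scott open in $\prod_{i\in F}P_i$, and it contains $x|_F$ because $e(x|_F)=x^{G}\in U$. By the hypothesis on finite products, $W$ is open in $\prod_{i\in F}\Sc(P_i)$. Now take any $y$ with $y|_F\in W$. Then $y\ge e(y|_F)\in U$, and $U$ is an upper set, so $y\in U$. Hence
\[
 x\in W\times\prod_{i\notin F}P_i\subseteq U,
\]
and $W\times\prod_{i\notin F}P_i$ is a basic open set of the product topology. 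So $U$ is open in $\prod_{i\in I}\Sc(P_i)$.

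The main delicate point is Step~1. One has to check that the family $\{x^{G}\}$ is directed, which it is because it is increasing in $G$, and that its supremum is $x$, which holds coordinatewise since each coordinate eventually equals $x_i$. One also has to make sure the coordinates in $J$ are never altered: they are kept equal to $x_i$ throughout, so $x^{G}\le x$ and the whole argument is consistent. The remaining steps are routine.
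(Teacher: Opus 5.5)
Your proof is correct and follows the paper's argument essentially verbatim. You approximate $x$ by the points that keep the coordinates in a finite $F\supseteq J$ and put $\bot_i$ elsewhere, pull $U$ back along the insertion map into $\prod_{i\in F}P_i$, and use the finite-product hypothesis together with upward closure of $U$ to obtain an open cylinder around $x$ inside $U$; the paper packages the same reasoning as the union formula $W=\bigcup_F\pi_F^{-1}(e_F^{-1}(W))$. One minor wording point: $W\times\prod_{i\notin F}P_i=\pi_F^{-1}(W)$ is an open cylinder (a union of basic boxes) rather than a basic open set, which is all the argument needs.
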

\begin{proof}
The case $I=\emptyset$ is immediate. Put $P=\prod_{i\in I}P_i$ and, for $i\notin J$, let $\bot_i$ be the least element of $P_i$. For $x\in P$ and a finite set $F$ with $J\subseteq F\subseteq I$, define
\begin{equation}\label{eq:finite-support-with-exceptions}
 x_i^F=
 \begin{cases}
  x_i,&i\in F,\\
  \bot_i,&i\notin F.
 \end{cases}
\end{equation}
The finite subsets $F$ containing $J$ form a directed set under inclusion. The corresponding points $x^F$ are directed, with supremum $x$: every coordinate is retained once $F$ contains its index.

Write $P_F=\prod_{i\in F}P_i$, let $\pi_F:P\to P_F$ be the projection, and let $e_F:P_F\to P$ insert $\bot_i$ in every coordinate outside $F$. Both maps preserve directed suprema. For every $W\in\Sc(P)$,
\begin{equation}\label{eq:finite-cylinder-with-exceptions}
 W=\bigcup_{\substack{F\subseteq I\text{ finite}\\J\subseteq F}}
       \pi_F^{-1}\bigl(e_F^{-1}(W)\bigr).
\end{equation}
Indeed, $x\in W$ implies $x^F\in W$ for some such $F$ by Scott openness, and membership in a set on the right implies $x^F\in W$ and $x^F\le x$, hence $x\in W$. The set $e_F^{-1}(W)$ is Scott open in $P_F$ and is therefore open in its finite product topology by hypothesis. Each cylinder on the right of~\eqref{eq:finite-cylinder-with-exceptions} is consequently open in $\prod_i\Sc(P_i)$. This proves one inclusion; Scott continuity of the coordinate projections gives the reverse inclusion.
\end{proof}

\Needspace{13\baselineskip}
\begin{theorem}\label{thm:arbitrary-L-products}
Let $(P_i)_{i\in I}$ be any family of nonempty countable meet-continuous domains, each of which is an $L$-dcpo. Then
\begin{equation}\label{eq:arbitrary-L-product}
 \Sc\!\left(\prod_{i\in I}P_i\right)
 =\prod_{i\in I}\Sc(P_i)
\end{equation}
holds if and only if the set
\[
 I_0=\{i\in I:P_i\text{ has no least element}\}
\]
is finite. Neither $I$ nor the whole product is required to be countable. The necessity of the finiteness condition holds for arbitrary nonempty $L$-dcpos, without countability or meet-continuity.
\end{theorem}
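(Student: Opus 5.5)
The plan is to prove the two directions separately. Sufficiency will be assembled from results already established. Necessity will come from an explicit Scott-open set built from the component decomposition of Lemma~\ref{lem:minimal-component-decomposition}.

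\emph{Sufficiency.} Assume $I_0$ is finite. For every finite $F\subseteq I$, the factors $P_i$ with $i\in F$ are countable meet-continuous $L$-dcpos. Theorem~\ref{thm:L-dcpo-products} therefore gives
\[
 \Sc\Bigl(\prod_{i\in F}P_i\Bigr)=\prod_{i\in F}\Sc(P_i).
\]
This holds without any least elements and also covers $F=\emptyset$. These are exactly the finite hypotheses of Proposition~\ref{prop:finite-support-transfer}. The set $J$ appearing there is our $I_0$, which is finite, so that proposition yields \eqref{eq:arbitrary-L-product}. Countability of $I$ plays no role here, since the finite-support argument works for any index set.

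\emph{Necessity.} Here we assume only that each $P_i$ is a nonempty $L$-dcpo, and that $I_0$ is infinite.
\begin{enumerate}[label=\textup{(\roman*)}]
\item For $i\in I_0$, Lemma~\ref{lem:minimal-component-decomposition}\textup{(3)} shows that $\operatorname{Min}(P_i)$ has at least two elements. Using choice, fix some $m_i\in\operatorname{Min}(P_i)$ and put $U_i=\up m_i$. By Lemma~\ref{lem:minimal-component-decomposition}\textup{(2)}, $U_i$ is Scott open and Scott closed, and both $U_i$ and its complement $P_i\setminus U_i$ are nonempty.
\item Define
\[
 W=\Bigl\{x\in\prod_{i\in I}P_i:\ x_i\in U_i\text{ for all }i\in I_0\Bigr\}.
\]
This set is nonempty, again by choice.
\item $W$ is an upper set because each $U_i$ is upper.
\item $W$ is Scott open. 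Let $D$ be directed with $\bigvee D=x\in W$. For each $i$, the coordinates $d_i$ ($d\in D$) lie below $x_i$. By Lemma~\ref{lem:minimal-component-decomposition}\textup{(1)}, each $d_i$ has the same minimal element as $x_i$, so $d_i\in U_i$. Hence every $d\in D$ already lies in $W$, and in particular $D\cap W\ne\emptyset$. The point is that the infinitely many conditions defining $W$ are each inherited by every element of $D$, not just by some tail.
\item $W$ is not open in the product topology $\prod_i\Sc(P_i)$. Any nonempty basic open set $\prod_i V_i$ contained in $W$ has $V_i=P_i$ for all but finitely many $i$. Since $I_0$ is infinite, choose $k\in I_0$ with $V_k=P_k$. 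Take a point of $\prod_i V_i$ and change its $k$-th coordinate to an element of $P_k\setminus U_k$. The new point stays in $\prod_i V_i$ but leaves $W$, which is a contradiction.
\end{enumerate}
Thus \eqref{eq:arbitrary-L-product} fails whenever $I_0$ is infinite. This argument uses neither countability nor meet-continuity.

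The main point needing care is step (iv): showing that $W$ is Scott open. This rests entirely on the rigidity of components in Lemma~\ref{lem:minimal-component-decomposition}, namely that points below one another share a minimal element. The rest of the argument is bookkeeping. This construction generalizes the two-point-antichain obstruction mentioned in connection with Example~\ref{ex:antichain-infinite-product}.
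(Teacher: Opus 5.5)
Your proposal is correct and follows the paper's proof. Sufficiency combines Theorem~\ref{thm:L-dcpo-products} with Proposition~\ref{prop:finite-support-transfer}, and necessity uses the clopen components $\up m_i$ of Lemma~\ref{lem:minimal-component-decomposition} to build a Scott-open set that contains no nonempty basic product-open set. The only difference is cosmetic: you constrain just the coordinates in $I_0$, whereas the paper takes $W=\up m=\prod_{i\in I}\up m_i$ over all of $I$.
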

\begin{proof}
Suppose first that $I_0$ is finite. Theorem~\ref{thm:L-dcpo-products} gives the required equality on every finite subproduct, so Proposition~\ref{prop:finite-support-transfer} proves~\eqref{eq:arbitrary-L-product}.

For the converse, only the $L$-dcpo hypothesis is used. Suppose $I_0$ is infinite. For each $i\in I$, choose $m_i\in\operatorname{Min}(P_i)$ and put $m=(m_i)_{i\in I}$. Set
\begin{equation}\label{eq:minimal-component-product}
 W=\up m=\prod_{i\in I}\up m_i.
\end{equation}
This is an upper subset of $P=\prod_iP_i$. If a nonempty directed set $D\subseteq P$ has supremum $s\in W$, then for every $d\in D$ and $i\in I$,
\[
 b_i(d_i)=b_i(s_i)=m_i,
\]
by Lemma~\ref{lem:minimal-component-decomposition}, since $d_i\le s_i$ and $m_i\le s_i$. Hence $m_i\le d_i$ for every $i$, so in fact $D\subseteq W$. Therefore $W\in\Sc(P)$.

However, $W$ is not open in the ordinary product topology. A basic product neighbourhood $N$ of $m$ restricts only finitely many coordinates, say those in $F$. Choose $j\in I_0\setminus F$. By Lemma~\ref{lem:minimal-component-decomposition}\textup{(3)}, the factor $P_j$ has a minimal element $n_j\ne m_j$. Replace just the $j$-th coordinate of $m$ by $n_j$, obtaining a point $z\in N$. Distinct minimal elements are incomparable, so $z\notin W$. Thus no basic product neighbourhood of $m$ is contained in $W$, which contradicts~\eqref{eq:arbitrary-L-product}.
\end{proof}

\Needspace{9\baselineskip}
\begin{corollary}\label{cor:arbitrary-powers}
For a nonempty countable meet-continuous $L$-dcpo $P$, the following are equivalent:
\begin{enumerate}[label=\textup{(\arabic*)}]
\item $P$ has a least element;
\item $\Sc(P^{\N})=\prod_{n\in\N}\Sc(P)$;
\item $\Sc(P^I)=\prod_{i\in I}\Sc(P)$ for every index set $I$.
\end{enumerate}
\end{corollary}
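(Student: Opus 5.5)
The plan is to read the corollary off Theorem~\ref{thm:arbitrary-L-products} by specializing to constant families $P_i=P$. In such a family the exceptional set $I_0=\{i\in I:P_i\text{ has no least element}\}$ is either empty (when $P$ has a least element) or all of $I$ (when it does not). I will therefore prove the cycle (1)$\Rightarrow$(3)$\Rightarrow$(2)$\Rightarrow$(1).

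For (1)$\Rightarrow$(3), fix an arbitrary index set $I$ and apply Theorem~\ref{thm:arbitrary-L-products} to the constant family $(P)_{i\in I}$. Each factor is a nonempty countable meet-continuous $L$-dcpo, as the theorem requires. Since $P$ has a least element, $I_0=\emptyset$ is finite, so the theorem gives $\Sc(P^I)=\prod_{i\in I}\Sc(P)$. The case $I=\emptyset$ is harmless: both sides are the unique topology on a singleton, and in any case it is covered by the theorem.

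The implication (3)$\Rightarrow$(2) is immediate by taking $I=\N$.

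For (2)$\Rightarrow$(1), argue by contraposition. If $P$ has no least element, then in the constant family indexed by $\N$ we get $I_0=\N$, which is infinite. The necessity half of Theorem~\ref{thm:arbitrary-L-products} then shows that the product equality fails for $P^{\N}$. Concretely, the set $W=\up m$, with $m$ a constant sequence at a minimal element, is Scott open but not product open. Here Lemma~\ref{lem:minimal-component-decomposition}(3) supplies a second minimal element in every coordinate.

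I expect no step to pose a real obstacle. The only points to check are that a constant family meets the theorem's hypotheses and that $I_0$ is all of $I$ or empty.
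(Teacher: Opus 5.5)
Your proposal is correct and follows the paper's proof: specialize Theorem~\ref{thm:arbitrary-L-products} to the constant family, where $I_0$ is either empty or all of the index set. This gives (1)$\Rightarrow$(3) and, since $I_0=\N$ is infinite when $P$ has no least element, also the failure of (2) in that case; (3)$\Rightarrow$(2) is immediate.
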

\begin{proof}
Theorem~\ref{thm:arbitrary-L-products} gives~(1)$\Rightarrow$(3), and~(3)$\Rightarrow$(2) is immediate. If $P$ has no least element, every factor in its countable power lacks one, so the same theorem excludes~(2).
\end{proof}

\begin{example}\label{ex:antichain-infinite-product}
Let $A=\{a,b\}$ carry the equality order. Every directed subset of $A$ is a singleton, so $A$ is meet-continuous, and every principal ideal is a singleton complete lattice. Thus $A$ is a finite meet-continuous $L$-dcpo. Its countable order power $A^{\N}$ is again an antichain, so its Scott topology is discrete. The ordinary product of the discrete topologies on the factors is not discrete, since a basic neighbourhood restricts only finitely many coordinates. Consequently,
\[
 \prod_{n\in\N}\Sc(A)\subsetneq\Sc(A^{\N}).
\]
This is the elementary obstruction observed in \cite[Section~5]{XuJi}. Theorem~\ref{thm:arbitrary-L-products} shows that, within the present class of factors, infinitely many factors without least elements account for every failure of the arbitrary-product equality.
\end{example}

The preceding theorem concerns equality of topologies, not sobriety of the factors or their product. In particular, it does not settle Question~\ref{quest:mc-dcpo-sober}. Its necessity argument uses local least elements and is not asserted for the broader class in Corollary~\ref{cor:bottom-free-local-products}.

\section{Further remarks}
Theorem~\ref{thm:L-dcpo-products} and Corollary~\ref{cor:bottom-free-local-products} answer Question~\ref{quest:mc-dcpo-product} under local completeness hypotheses. The missing step for a general countable meet-continuous domain is to obtain an adequate substitute for these local test families, or to construct a counterexample to the product equality. The present argument does not assert that its family of ideals is countable.

Theorem~\ref{thm:arbitrary-L-products} gives the exact additional requirement for arbitrary products within the class of countable meet-continuous $L$-dcpos: all but finitely many factors must have least elements. This criterion does not remove the local lattice hypothesis in the original finite-product question.

For Question~\ref{quest:mc-dcpo-sober}, Proposition~\ref{prop:bounded-complete-sober} gives a positive bounded-completeness case, while Corollaries~\ref{cor:conditional-sobriety} and~\ref{cor:finite-upper-bound-generators} give additional sufficient conditions within the local-completeness class. Assuming the square product equality, Proposition~\ref{prop:irreducible-upper-bound-sober} and Corollary~\ref{cor:sequential-upper-bound} reduce sobriety to the common-upper-bound lifting condition for irreducible Scott-closed sets. They do not establish that this condition follows from countability and meet-continuity.

In particular, sobriety for all countable meet-continuous $L$-dcpos is not claimed. Their principal ideals are sober by the complete-lattice theorem, but no unproved local-to-global sobriety principle is used. Example~\ref{ex:Ldcpo-not-strong} also rules out a proof that would first derive the strong $d$-space property for every member of this class. These distinctions leave both general questions available for further investigation without changing the hypotheses of the proved statements.


\begin{thebibliography}{99}
\raggedright
\bibitem{AbramskyJung}
S. Abramsky, A. Jung,
Domain theory,
in: S. Abramsky, D. Gabbay, T. Maibaum (Eds.),
\emph{Handbook of Logic in Computer Science}, vol.~3,
\emph{Semantic Structures}, Clarendon Press, 1994, pp.~1--168.

\bibitem{Gierz}
G. Gierz, K. H. Hofmann, K. Keimel, J. D. Lawson, M. Mislove, D. S. Scott,
\emph{Continuous Lattices and Domains},
Encyclopedia of Mathematics and its Applications, vol.~93,
Cambridge University Press, Cambridge, 2003.
\href{https://doi.org/10.1017/CBO9780511542725}{doi:10.1017/CBO9780511542725}.

\bibitem{JiaJungLi}
X. Jia, A. Jung, Q. Li,
A dichotomy result for locally compact sober dcpos,
\emph{Houston J. Math.} 45 (3) (2019), 935--951.


\bibitem{KouLiuLuo}
H. Kou, Y.-M. Liu, M.-K. Luo,
On meet-continuous dcpos,
in: G. Q. Zhang, J. D. Lawson, Y.-M. Liu, M.-K. Luo (Eds.),
\emph{Domain Theory, Logic and Computation},
Semantic Structures in Computation, vol.~3,
Springer, Dordrecht, 2003, pp.~117--135.
\href{https://doi.org/10.1007/978-94-017-1291-0_5}{doi:10.1007/978-94-017-1291-0\_5}.

\bibitem{LawsonXu}
J. Lawson, X. Xu,
Spaces determined by countably many locally compact subspaces,
\emph{Topology Appl.} 354 (2024), 108973.
\href{https://doi.org/10.1016/j.topol.2024.108973}{doi:10.1016/j.topol.2024.108973}.

\bibitem{LawsonXuLower}
J. Lawson, X. Xu,
$T_0$-spaces and the lower topology,
\emph{Math. Struct. Comput. Sci.} 34 (2024), 467--490.
\href{https://doi.org/10.1017/S0960129524000240}{doi:10.1017/S0960129524000240}.


\bibitem{LyuKou}
Z. Lyu, H. Kou,
The probabilistic powerdomain from a topological viewpoint,
\emph{Topology Appl.} 237 (2018), 26--36.
\href{https://doi.org/10.1016/j.topol.2018.01.008}{doi:10.1016/j.topol.2018.01.008}.


\bibitem{Miao}
H. Miao, X. Xi, Q. Li, D. Zhao,
Not every countable complete distributive lattice is sober,
\emph{Math. Struct. Comput. Sci.} 33 (9) (2023), 809--831.
\href{https://doi.org/10.1017/S0960129523000269}{doi:10.1017/S0960129523000269}.

\bibitem{XuJi}
X. Xu, W. Ji,
Every countable meet-continuous lattice is Scott sober,
preprint, 2026. arXiv:2609.19612v1.

\end{thebibliography}
\end{document}